\newtheorem{theorem}{Theorem}[section]
\newtheorem{lemma}[theorem]{Lemma}
\newtheorem{proposition}[theorem]{Proposition}
\newtheorem{conjecture}[theorem]{Conjecture}
\providecommand{\customgenericname}{}
\newcommand{\newcustomtheorem}[2]{%
	\newenvironment{#1}[1]
	{%
		\renewcommand\customgenericname{#2}%
		\renewcommand\theinnercustomgeneric{##1}%
		\innercustomgeneric
	}
	{\endinnercustomgeneric}
}
\theoremstyle{definition}
\newtheorem{example}[theorem]{Example}
\newtheorem{definition}[theorem]{Definition}
\newcommand{\kron}[2]{\left(\frac{#1}{#2}\right)}
\newcommand{\bZ}{\mathbb Z}
\newcommand{\bQ}{\mathbb Q}
\newcommand{\bN}{\mathbb N}
\newcommand{\Rbar}{\overline{R}}
\newcommand{\modulo}[1]{\textup{ (mod }#1)}
\newcommand{\ow}{\textup{otherwise}}
\newcommand{\cO}{\mathcal{O}}
\newcommand{\term}[1]{\textbf{\textup{#1}}}
\newcommand{\quot}[2]{\large\sfrac{#1}{#2}}
\newcommand{\bigquot}[2]{\Large\sfrac{#1}{#2}}
\newcommand{\lcm}{\textup{LCM}}
\title[Locally Associated Quadratic Orders]{Locally Associated Orders in Real Quadratic Number Fields}
\author{Grant Moles\and Talha Khan}
\begin{document}

\begin{abstract}
    In 2025, the concept of an order in a number field being associated, ideal-preserving, or locally associated was introduced in order to tackle problems in factorization. In this paper, we explore locally associated orders in real quadratic number fields of the form $\mathbb{Q}[\sqrt{p}]$, with $p\in\mathbb{N}$ prime. In particular, we develop strategies and produce results which make determining when a given order in such a number field is (or is not) locally associated much easier. We also highlight the relatively few cases which defy simple characterization, leading to a conjecture on the solutions to Pell's equations of the form $x^2-y^2p=1$.
\end{abstract}

\maketitle

\section{Introduction}
Fundamental to our understanding of the ring of rational integers $\bZ$ is the aptly named Fundamental Theorem of Arithmetic, which states that every integer (aside from $-1$, $0$, and $1$), factors uniquely (up to reordering and sign) into a product of prime integers. In other words, $\bZ$ is a unique factorization domain (UFD). That said, not every integral domain exhibits such nice factorization; in fact, one does not need to look far beyond $\bZ$ for unique factorization to fail. The canonical example of this failure is the ring $R=\bZ[\sqrt{-5}]=\{a+b\sqrt{-5}|a,b\in\bZ\}$, in which $6$ decomposes into the non-equivalent irreducible factorizations $2\cdot 3$ and $(1+\sqrt{-5})(1-\sqrt{-5})$. In this case, the ring $R$ is in fact a half-factorial domain (HFD); that is, every element decomposes into a unique number of irreducible factors (note that either way one factors 6 in the example above, there are exactly two irreducible factors).

The study of factorization, and in particular the quantification of how badly unique factorization fails, has been an exciting field of study over the last several decades. Half-factorial domains were first described in \cite{carlitz} by Carlitz in 1960; this property was further investigated and named by Zaks in \cite{zaks1} and \cite{zaks2}. In these papers, a complete characterization of half-factorial rings of algebraic integers was obtained. Additional classifications of domains by their factorization type were introduced in \cite{andersonandersonzafrullah}, and more recently in \cite{boyntoncoykendall}.

In 1990, Valenza in \cite{valenza} expanded upon the idea of a half-factorial domain, introducing the concept of elasticity as a measure of how badly unique factorization fails in an atomic domain. This paper (specifically, its Proposition 4), along with Narkiewicz in \cite{narkiewicz}, gave a way to use the ideal class group of a ring of algebraic integers to completely determine its elasticity, expanding on the ideas in \cite{carlitz}.

With the question of elasticity solved in the case of rings of algebraic integers, the natural next step is to consider orders within an algebraic number field. In this paper, we will primarily focus on orders in a quadratic number field, which can be characterized as follows.

\begin{proposition}
    Let $K$ be a quadratic number field, i.e. a ring of the form $K=\bQ[\sqrt{d}]=\{a+b\sqrt{d}|a,b\in\bQ\}$ for some squarefree integer $d$. The \term{ring of algebraic integers} in $K$ is the ring $\cO_K=\bZ[\alpha]$, where $\alpha=\frac{1+\sqrt{d}}{2}$ if $d\equiv 1\modulo{4}$ and $\alpha=\sqrt{d}$ otherwise. An \term{order} in $K$ is any ring of the form $R_n=\bZ[n\alpha]=\{a+nb\sqrt{d}|a,b\in\bZ\}$ for some $n\in\bN$. We will refer to $n$ as the \term{index} of the order $R_n$ in $K$.
\end{proposition}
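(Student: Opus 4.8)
The plan is to prove the two assertions of the proposition separately: first the description of the maximal order $\cO_K$, and then the classification of all orders as the rings $R_n = \bZ[n\alpha]$.

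For $\cO_K$, I would start from the standard fact that an element $\beta$ of the quadratic field $K=\bQ[\sqrt d]$ is an algebraic integer if and only if $\beta+\sigma(\beta)\in\bZ$ and $\beta\,\sigma(\beta)\in\bZ$, where $\sigma$ is the nontrivial $\bQ$-automorphism of $K$; this is just the statement that the monic polynomial $t^2-(\beta+\sigma\beta)t+\beta\,\sigma\beta$ has integer coefficients (and a one-line check handles the degenerate case $\beta\in\bQ$). Writing $\beta=x+y\sqrt d$ with $x,y\in\bQ$, the conditions become $2x\in\bZ$ and $x^2-dy^2\in\bZ$. A short case analysis on whether $x\in\bZ$ or $x\in\frac12+\bZ$, together with the hypothesis that $d$ is squarefree, pins down the solutions: in the first case one forces $y\in\bZ$, and in the second one is forced into $y\in\frac12+\bZ$ and $d\equiv1\modulo{4}$. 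Conversely, $\frac{1+\sqrt d}{2}$ is a root of $t^2-t+\frac{1-d}{4}$, which has integer coefficients exactly when $d\equiv1\modulo{4}$. This identifies $\cO_K$ with the additive group $\bZ\oplus\bZ\alpha$ for the stated $\alpha$, and since $\alpha$ satisfies a monic integer quadratic we have $\alpha^2\in\bZ\oplus\bZ\alpha$, so this group is closed under multiplication and equals $\bZ[\alpha]$.

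For the classification of orders, recall that an order in $K$ is a subring that is finitely generated as a $\bZ$-module and spans $K$ over $\bQ$ (equivalently, a subring of $\cO_K$ of finite index). First I would note that any order $\cO$ is contained in $\cO_K$: each element of $\cO$ is integral over $\bZ$, being an element of a ring that is module-finite over $\bZ$, so it lies in $\cO_K$; and $1\in\cO$ forces $\bZ\subseteq\cO$. Thus $\bZ\subseteq\cO\subseteq\cO_K=\bZ\oplus\bZ\alpha$ with $\cO$ of full rank $2$. Now set $I=\{\,m\in\bZ : a+m\alpha\in\cO\text{ for some }a\in\bZ\,\}$; using the direct-sum decomposition of $\cO_K$ this is a well-defined subgroup of $\bZ$, hence $I=n\bZ$ for a unique $n\ge 0$, and $n\ge 1$ because $\cO$ has full rank. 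Choosing $a+n\alpha\in\cO$ and subtracting $a\in\bZ\subseteq\cO$ gives $n\alpha\in\cO$, so $\bZ[n\alpha]=\bZ\oplus\bZ n\alpha\subseteq\cO$; conversely any $a+b\alpha\in\cO$ has $b\in I=n\bZ$, hence lies in $\bZ\oplus\bZ n\alpha$. Therefore $\cO=\bZ[n\alpha]=R_n$. Finally, each $R_n=\bZ\oplus\bZ n\alpha$ genuinely is an order: it contains $1$, it is closed under multiplication since $(n\alpha)^2=n\,(\alpha+\sigma\alpha)\,(n\alpha)-n^2(\alpha\,\sigma\alpha)\in\bZ\oplus\bZ n\alpha$, and it has rank $2$ because $n\ge 1$.

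I do not expect a single deep obstacle; the argument is essentially bookkeeping with rank-two lattices plus one parity computation. The place needing real care is the mod-$4$ case analysis in the determination of $\cO_K$ — tracking denominators and the parities of numerators carefully enough to extract $d\equiv1\modulo{4}$ in the half-integer case — and, secondarily, reconciling the coordinate description $\{\,a+nb\sqrt d : a,b\in\bZ\,\}$ with $\bZ[n\alpha]$: these agree verbatim when $d\not\equiv1\modulo{4}$ (so $\alpha=\sqrt d$), while for $d\equiv1\modulo{4}$ the ring $\bZ[n\alpha]$ is better written as $\bZ+n\cO_K$. I would phrase the classification in the index-$n$ form $R_n=\bZ[n\alpha]$ and flag this minor discrepancy in notation.
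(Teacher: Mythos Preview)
The paper does not prove this proposition at all; it is stated in the introduction as a standard background fact, without an accompanying \texttt{proof} environment, and the exposition moves on immediately. So there is no argument in the paper to compare your proposal against.

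Your proposed argument is the standard one and is correct in outline. Two remarks. First, the trace/norm characterization and the rank-two lattice argument for orders are exactly what one finds in any textbook treatment, so this is entirely appropriate content for a proof of this proposition. Second, you are right to flag the notational discrepancy: when $d\equiv 1\modulo{4}$ and $\alpha=\frac{1+\sqrt d}{2}$, the ring $\bZ[n\alpha]=\bZ+n\cO_K$ is \emph{not} literally the set $\{a+nb\sqrt d\mid a,b\in\bZ\}$ written in the statement (that set is $\bZ[n\sqrt d]$, the index-$2n$ order rather than the index-$n$ order). The paper's later arguments consistently use $R_n=\bZ[n\alpha]$ with elements written as $\frac{c+d\sqrt p}{2}$ when $p\equiv 1\modulo{4}$, so the intended object is indeed $\bZ+n\cO_K$; the explicit coordinate description in the proposition is simply imprecise in that case, and your plan to phrase everything in terms of $\bZ[n\alpha]$ and note the discrepancy is the right call.
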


Throughout this paper, orders in quadratic number fields will be of primary interest. As the ring of algebraic integers $\cO_K$ will always be the integral closure of any order in $K$, we will often write $\Rbar$ to refer to this ring.

Halter-Koch in \cite[Theorem 6]{halter-koch} and Coykendall in \cite[Theorem 2.3]{coykendallhfd} gave the following characterizations of half-factorial orders in quadratic number fields. For convenience, the theorem will be presented in the notation found in this paper rather than those of the original papers; in particular, for a ring $R$, we will use $U(R)$ to denote the group of units in $R$.

\begin{theorem}
    \label{halter}
    Let $K=\bQ[\sqrt{d}]$ be a real quadratic number field (i.e. $d>0$) and $R_n$ the index $n>1$ order in $K$. Then $R_n$ is an HFD if and only if the following properties all hold:
    \begin{enumerate}
        \item $\Rbar$ is an HFD;
        \item $\Rbar=R_n\cdot U(\Rbar)$; and
        \item $n$ is either prime or twice an odd prime.
    \end{enumerate}
\end{theorem}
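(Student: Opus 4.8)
The plan is to package factorization lengths into a homomorphism and read the hypotheses off that. Recall that an atomic domain $R$ is an HFD exactly when there is an additive map $\ell\colon (R\setminus\{0\},\cdot)\to(\bZ_{\geq 0},+)$ vanishing on $U(R)$ and equal to $1$ on every atom; if it exists it is unique and returns the common length of all factorizations. Since a ring of algebraic integers is an HFD iff its class group has order at most $2$, condition (1) is equivalent to $|\Cl(\Rbar)|\leq 2$, and then $\Rbar$ carries the explicit length function $\ell_{\Rbar}(x)=\sum_{\mathfrak p}\varepsilon_{\mathfrak p}\,v_{\mathfrak p}(x)$, where $\varepsilon_{\mathfrak p}=1$ if the prime $\mathfrak p$ is principal and $\varepsilon_{\mathfrak p}=\tfrac12$ otherwise (this is integer valued on $\Rbar\setminus\{0\}$ since every principal ideal has trivial class). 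Two further facts do the bookkeeping: the conductor of $R_n$ in $\Rbar$ is the ideal $n\Rbar$, and $R_n\hookrightarrow\Rbar$ is an isomorphism after inverting the finitely many rational primes dividing $n$, so all the ``singular'' behavior sits over those primes; and $U(\Rbar)/U(R_n)$ is finite since $K$ is real quadratic. I would prove necessity of the three conditions by contrapositive, exhibiting a bad factorization whenever one fails, and prove sufficiency by showing $\ell_{\Rbar}$ itself descends to a length function on $R_n$.

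For sufficiency, assume (1)--(3). As $R_n$ is Noetherian it is atomic, so it suffices to show \emph{every atom $\pi$ of $R_n$ has $\ell_{\Rbar}(\pi)=1$}: then $\ell_{\Rbar}$ restricted to $R_n\setminus\{0\}$ is additive, kills $U(R_n)\subseteq U(\Rbar)$, and is $1$ on atoms, hence is the sought length function and $R_n$ is an HFD. If $\pi$ stays irreducible in $\Rbar$ this is immediate; otherwise $\pi=\beta_1\cdots\beta_m$ with each $\beta_i$ an atom of $\Rbar$ and $m=\ell_{\Rbar}(\pi)$, and we must rule out $m\geq 2$. Using (2), write $\beta_i=u_i\gamma_i$ with $u_i\in U(\Rbar)$ and $\gamma_i\in R_n$, so $\pi=u\,\gamma_1\cdots\gamma_m$ with $u=\prod u_i\in U(\Rbar)$ and $u\gamma_1\cdots\gamma_m\in R_n$. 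The goal is to absorb the stray unit $u$ into the $\gamma_i$ (modifying further by units of $\Rbar$, where finiteness of $U(\Rbar)/U(R_n)$ bounds the possibilities) to realize $\pi$ as a product of $m\geq 2$ nonunits of $R_n$, contradicting irreducibility in $R_n$. On the good locus this costs nothing; the only obstruction is local at the primes dividing $n$, and condition (3) is exactly what keeps that obstruction manageable. For $n=p$ the ring $R_n$ has a single singular prime, the conductor $p\Rbar$, with residue field $\bF_p$, whose complete local ring is that of a double point of one of three types according to whether $p$ splits, ramifies, or is inert in $\Rbar$. For $n=2p$ there are just two singular primes, over $2$ and over $p$, each again a double point, and the one over $2$ has trivial residue unit group $\bF_2^{\times}$. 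In each of these cases a case analysis by the decomposition of the prime(s) over $n$, together with the unit bookkeeping modulo $U(\Rbar)/U(R_n)$, shows the absorption can always be done, so $R_n$ has no atom of $\ell_{\Rbar}$-value exceeding $1$. When $n$ is instead divisible by $4$, by the square of an odd prime, or by two distinct odd primes, this absorption genuinely fails, and that is the source of the peculiar hypothesis on $n$.

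For necessity I would argue the contrapositive of each clause. If (1) fails then $|\Cl(\Rbar)|\geq 3$, and the standard construction (a class $g$ of order $m\geq 3$, primes $\mathfrak p\in g$, $\mathfrak q\in g^{-1}$, atoms $\delta,\epsilon,\gamma$ of $\Rbar$ with $\gamma^{m}=v\,\delta\epsilon$ for a unit $v$) transports into $R_n$: choose $\mathfrak p,\mathfrak q$ coprime to the conductor by Chebotarev, replace $\delta,\epsilon,\gamma,v$ by $U(R_n)$-multiples and bounded powers so everything lies in $R_n$, and obtain two factorizations in $R_n$ of different lengths. If (2) fails, some prime $\mathfrak p$ of $\Rbar$ has no generator that, after rescaling by a unit of $\Rbar$, lands in $R_n$; a suitable power $\mathfrak p^{k}$ that does have an $R_n$-generator (coming from the rational prime below $\mathfrak p$, or from a norm) then admits an ``efficient'' factorization through short $\Rbar$-atoms rescaled into $R_n$, blocked precisely at $\mathfrak p$, alongside a strictly longer factorization that survives, giving unequal lengths. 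If (3) fails, then $4\mid n$, or $q^{2}\mid n$ for an odd prime $q$, or two distinct odd primes divide $n$; in each case I would write down an explicit small witness — typically a rational prime power, or a norm from $\Rbar$, expanded near the conductor — with two factorizations of different lengths, the model being ``$p\cdot p$ versus $\pi\bar\pi$'' amplified by the extra slack the shape of $n$ introduces at the singular locus.

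The main obstacle is the sufficiency direction, specifically the assertion that $R_n$ has no atom of $\ell_{\Rbar}$-value $\geq 2$: this is a genuinely local statement about factorization in the non-regular local rings $(R_n)_{\mathfrak p}$ for $\mathfrak p\mid n$, and ``$n$ prime or twice an odd prime'' is precisely the constraint under which those local rings are simple enough (one or two double points, with no nontrivial residue unit group away from a possible prime over $2$) that every would-be long atom can be refined. The remainder is comparatively formal; the conductor-local case analysis with unit bookkeeping modulo the finite group $U(\Rbar)/U(R_n)$ is where the real work lies. The same argument can be organized through the weakly Krull / block-monoid description of factorization in orders, but the decisive point — controlling the conductor primes against the class group — is unchanged.
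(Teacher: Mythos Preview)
The paper does not contain a proof of this theorem. Theorem~\ref{halter} is stated in the introduction as a known result and attributed to Halter-Koch and Coykendall via the citations \texttt{halter-koch} and \texttt{coykendallhfd}; the surrounding text says only that ``the theorem will be presented in the notation found in this paper rather than those of the original papers.'' There is therefore no in-paper argument against which to compare your proposal.

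As a standalone sketch your approach is broadly in the spirit of the literature: the length homomorphism $\ell_{\Rbar}$ you describe is essentially the Davenport-constant/transfer-homomorphism machinery specialized to class number at most two, and reducing to ``no atom of $R_n$ has $\ell_{\Rbar}$-value $\geq 2$'' is the right target for sufficiency. That said, the decisive step---your ``case analysis by the decomposition of the prime(s) over $n$, together with the unit bookkeeping modulo $U(\Rbar)/U(R_n)$''---is asserted rather than carried out, and this is exactly where the content of condition~(3) lives. In particular, the claim that for $n=2p$ ``the one over $2$ has trivial residue unit group $\bF_2^{\times}$'' is not quite right when $2$ is inert or ramified in $\Rbar$ (the local picture is more subtle than a single $\bF_2$), and the absorption argument genuinely depends on the interaction between condition~(2) and the structure of $U(\Rbar)/U(R_n)$ at each conductor prime. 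If you want to make this self-contained you will need to write out those local cases explicitly; as written, the proposal is a plausible outline but not yet a proof.
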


\begin{theorem}
    \label{imaginaryhfd}
    Let $K=\bQ[\sqrt{d}]$ be an imaginary quadratic number field (i.e. $d<0$) and $R_n$ the index $n>1$ order in $K$. Then $R_n$ is an HFD if and only if $d=-3$ and $n=2$.
\end{theorem}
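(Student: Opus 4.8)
The plan is to run the Halter--Koch-type characterization of half-factorial quadratic orders — of which Theorem~\ref{halter} is the real quadratic case — and to exploit that, over an imaginary field, the hypothesis $\Rbar = R_n\cdot U(\Rbar)$ is by itself so rigid that it pins down $(d,n)$ completely. So I would first record the imaginary analogue: an order $R_n$ with $n>1$ is an HFD if and only if (i)~$\Rbar=\cO_K$ is an HFD, (ii)~$\Rbar=R_n\cdot U(\Rbar)$, and (iii)~$n$ is prime or twice an odd prime. The key structural fact is that for imaginary $K$ the unit group $U(\Rbar)$ is finite; by Dirichlet's unit theorem it equals $\{\pm1\}$ unless $d=-1$, where $|U(\Rbar)|=4$, or $d=-3$, where $|U(\Rbar)|=6$. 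Since we will see that (ii) already forces $n=2$, it will not matter whether (iii) reads exactly as in the real case.

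Next I would convert (ii) into a finite counting problem. The order of conductor $n$ satisfies $n\Rbar\subseteq R_n\subseteq\Rbar$ with $[\Rbar:R_n]=n$, and for a unit $u$ multiplication by $u$ is a $\bZ$-module automorphism of $\Rbar$ fixing $n\Rbar$; hence each $uR_n$ is an additive subgroup with $n\Rbar\subseteq uR_n\subseteq\Rbar$, and $\Rbar=R_n\cdot U(\Rbar)$ holds exactly when the images of the $uR_n$ cover $\Rbar/n\Rbar\cong(\bZ/n\bZ)^2$. Each image has $n$ elements, and since $u$ and $-u$ give the same subgroup there are at most $|U(\Rbar)|/2$ of them, so their union has at most $\tfrac{|U(\Rbar)|}{2}(n-1)+1$ elements. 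Comparing with $n^2=|\Rbar/n\Rbar|$: if $U(\Rbar)=\{\pm1\}$ one would need $n\ge n^2$, impossible for $n>1$, so no order with $d\notin\{-1,-3\}$ and $n>1$ is an HFD; if $d=-1$ one would need $2n-1\ge n^2$, i.e.\ $(n-1)^2\le0$, again impossible for $n>1$; and if $d=-3$ one would need $3n-2\ge n^2$, i.e.\ $(n-1)(n-2)\le0$, leaving only $n=2$.

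There remains to check that $d=-3$, $n=2$ genuinely gives an HFD. Here $R_2=\bZ[2\alpha]=\bZ[\sqrt{-3}]$. Because $R_2$ contains $1$ it is not an ideal of $\Rbar$, so the three subgroups $R_2,\ \omega R_2,\ \omega^2R_2$ (with $\omega=\alpha=\tfrac{1+\sqrt{-3}}{2}$) are pairwise distinct; using the action $(a,b)\mapsto(-b,a+b)$ of $\omega$ on the coordinates of $a+b\omega$, one checks that modulo $2\Rbar$ these become the three lines spanned by $(1,0)$, $(0,1)$, $(1,1)$ in $(\bZ/2\bZ)^2$, whose union is the whole group, so (ii) holds. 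Condition (i) holds because $\Rbar=\bZ[\omega]$ is a PID, and (iii) holds because $2$ is prime. Hence $R_2$ is an HFD, which together with the previous paragraph gives the theorem.

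The part I expect to demand real care is (a)~pinning down the imaginary analogue of Theorem~\ref{halter}, which I would rather quote from Halter--Koch than reprove, and (b)~the bookkeeping in the reduction: verifying $n\Rbar\subseteq R_n$ and $[\Rbar:uR_n]=n$, verifying that the $uR_n$ really are distinct (this is where the observation that $R_n$ cannot be an $\Rbar$-ideal for $n>1$ is used), and the explicit covering computation in $(\bZ/2\bZ)^2$. None of this is deep, but it is just the sort of place where a hasty argument could overcount the subgroups and wrongly admit a larger index $n$.
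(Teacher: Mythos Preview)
The paper does not actually prove Theorem~\ref{imaginaryhfd}: it is stated alongside Theorem~\ref{halter} as a result quoted from Halter--Koch and Coykendall, with no argument given. So there is no ``paper's own proof'' to compare against; you are supplying strictly more than the paper does.

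Your argument itself is sound. The reduction of condition~(ii) to a covering of $\Rbar/n\Rbar\cong(\bZ/n\bZ)^2$ by the additive images of the translates $uR_n$ is correct (each $uR_n$ is an additive subgroup of index~$n$ containing the ideal $n\Rbar$, since $u\cdot n\Rbar=n\Rbar$), and the union bound $\tfrac{|U(\Rbar)|}{2}(n-1)+1\ge n^2$ cleanly eliminates all cases except $d=-3$, $n=2$. The explicit check that $R_2$, $\omega R_2$, $\omega^2R_2$ hit the three nonzero elements of $(\bZ/2\bZ)^2$ is fine, as is the observation that $\bZ[\omega]$ is a PID (so an HFD) and that $n=2$ satisfies~(iii).

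The only real dependency, which you correctly flag, is that you are \emph{assuming} the Halter--Koch criterion (i)--(iii) holds verbatim in the imaginary case; the paper states Theorem~\ref{halter} only for real $K$. If you want the argument to stand on its own you must either cite a version of Halter--Koch's theorem valid for all quadratic orders, or else argue sufficiency for $\bZ[\sqrt{-3}]$ directly (e.g.\ by noting that every nonzero nonunit of $\bZ[\sqrt{-3}]$ is associate in $\bZ[\omega]$ to an element of $\bZ[\sqrt{-3}]$, and using unique factorization in $\bZ[\omega]$ to compare lengths). With that input secured, your proof is complete and in fact more informative than the paper's bare citation.
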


Halter-Koch's result was expanded upon in 2023 and 2025 in \cite[Theorem 1.1]{rago} and \cite[Theorem 3.8]{radicalconductor}, respectively. In the former, a full characterization of half-factorial orders in a general number field was developed; in the latter, conditions for the elasticity of an order to match that of its ring of algebraic integers were investigated. Interestingly, all three results included the condition that the order $R$ in question had the property that $\Rbar=R\cdot U(\Rbar)$, i.e. that any element $\alpha\in \Rbar$ can be written as the product of an element $r\in R$ and a unit $u\in U(\Rbar)$. This led to the following definitions being presented in \cite[Definitions 2.1.1, 2.1.5, 2.4.1]{dissertation} and later expanded upon in \cite{radicalconductor} and \cite{subringrelations}.

\begin{definition}
    \label{aipla}
    Let $K$ be a number field and $R$ an order in $K$ with conductor ideal $I:=(R:\Rbar)=\{\alpha\in R|\alpha\Rbar\subseteq R\}$. We say that $R$ is an \term{associated order} if $\Rbar=R\cdot U(\Rbar)$. We say that $R$ is a \term{ideal-preserving order} if, for any $\Rbar$-ideals $J_1\nsubseteq J_2$, $R\cap J_1\nsubseteq J_2$. Finally, we say that $R$ is a \term{locally associated order} if
    $$\bigquot{U(\Rbar)}{U(R)}\normalsize\cong \bigquot{U(\quot{\Rbar}{I})}{U(\quot{R}{I})}.$$
\end{definition}

The utility of associated orders has already been demonstrated; this property can give us information about how the known factorization properties of $\Rbar$ influence the factorization properties of $R$. The utility of the other two properties presented here are demonstrated in \cite{subringrelations}. Of particular interest is \cite[Corollary 4.13]{subringrelations}, which demonstrates how these properties relate to one another.

\begin{theorem}
    \label{aiffipla}
    Let $R$ be an order in a number field $K$. Then $R$ is associated if and only if $R$ is both ideal-preserving and locally associated.
\end{theorem}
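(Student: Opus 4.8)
The plan is to treat the two implications separately after installing some standard machinery for orders. Since $\Rbar$ and $R$ are free $\bZ$-modules of the same rank, there is a nonzero integer $m$ with $m\Rbar\subseteq R$, so the conductor $I=(R:\Rbar)$ is nonzero and $R/I$, $\Rbar/I$ are finite rings. Reduction modulo $I$ induces a homomorphism $\psi\colon U(\Rbar)/U(R)\to U(\Rbar/I)/U(R/I)$, and the first step is the easy observation that $\psi$ is \emph{injective}: if $u\in U(\Rbar)$ and $\bar u=\bar v$ with $v\in U(R)$, then $uv^{-1}-1\in I\subseteq R$ and $vu^{-1}-1\in I\subseteq R$, so $u\in U(R)$. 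Hence $U(\Rbar)/U(R)$ is finite, and $R$ is locally associated precisely when $\psi$ is bijective, i.e. when $U(\Rbar/I)$ is the product of the image of $U(\Rbar)$ with $U(R/I)$; by the Mayer--Vietoris sequence attached to the conductor square with corners $R,\Rbar,R/I,\Rbar/I$, this is also equivalent to $\Pic(R)\to\Pic(\Rbar)$ being injective, and I will pass freely between these formulations. I will also repeatedly use that a lift to $R$ of a unit of $\Rbar/I$ is automatically a unit of $R/I$ (being a non-zero-divisor in the finite ring $R/I$, since it is one in $\Rbar/I$), and the standard bijection between $\Rbar$-ideals coprime to $I$ and invertible $R$-ideals coprime to $I$ given by $J\mapsto R\cap J$.

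For the forward implication, assume $\Rbar=R\cdot U(\Rbar)$. If $J_1\nsubseteq J_2$ are $\Rbar$-ideals, choose $\alpha\in J_1\setminus J_2$ and write $\alpha=ru$ with $r\in R$, $u\in U(\Rbar)$; then $r=\alpha u^{-1}\in R\cap J_1$, and $r\in J_2$ would force $\alpha=ru\in J_2$, so $R$ is ideal-preserving. For local association, take $\bar x\in U(\Rbar/I)$; any lift $x\in\Rbar$ avoids every prime of $\Rbar$ containing $I$, so $(x)\Rbar$ is coprime to $I$, and writing $x=ru$ as before, $(r)\Rbar=(x)\Rbar$ is coprime to $I$, whence $\bar r\in U(\Rbar/I)$ and so $\bar r\in U(R/I)$; thus $\bar x=\bar r\,\bar u$ shows $\psi$ is surjective.

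The converse is the heart of the matter. Assume $R$ is ideal-preserving and locally associated and fix a nonzero $\alpha\in\Rbar$; I must find $u\in U(\Rbar)$ with $\alpha u\in R$. Put $J=(\alpha)\Rbar$. First, ideal-preservation forbids any prime $\mathfrak q\supseteq I$ of $R$ from splitting in $\Rbar$: if $\mathfrak p_1\ne\mathfrak p_2$ both lay over $\mathfrak q$, then $R\cap\mathfrak p_1=\mathfrak q\subseteq\mathfrak p_2$ while $\mathfrak p_1\nsubseteq\mathfrak p_2$. Next, for each $\mathfrak p\mid I$ we have $\mathfrak pJ\subsetneq J$, so ideal-preservation gives $\beta_{\mathfrak p}\in R\cap J$ with $\beta_{\mathfrak p}\notin\mathfrak pJ$, i.e. $v_{\mathfrak p}(\beta_{\mathfrak p})=v_{\mathfrak p}(J)$; since the primes of $R$ under the $\mathfrak p\mid I$ are now pairwise distinct, hence comaximal, a ``$1$ at one prime, $0$ at the others'' application of the Chinese Remainder Theorem combines the $\beta_{\mathfrak p}$ into a single $\beta\in R\cap J$ with $v_{\mathfrak p}(\beta)=v_{\mathfrak p}(J)$ for all $\mathfrak p\mid I$, so that $\mathfrak b:=(\beta)\Rbar\,J^{-1}$ is an integral $\Rbar$-ideal coprime to $I$. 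Because $J$ is principal, $[\mathfrak b]=0$ in $\Cl(\Rbar)$, so the invertible $R$-ideal $\mathfrak b_0:=R\cap\mathfrak b$ satisfies $\mathfrak b_0\Rbar=\mathfrak b$ and, by injectivity of $\Pic(R)\to\Pic(\Rbar)$, is principal, say $\mathfrak b_0=\theta R$ with $\theta\in R$ and $\theta\Rbar=\mathfrak b$. Finally $(\beta)\Rbar=J\mathfrak b\subseteq\mathfrak b$ and $\beta\in R$ give $\beta\in\mathfrak b_0=\theta R$, so $\gamma:=\beta/\theta\in R$, while $\gamma\Rbar=(\beta)\Rbar\,(\theta\Rbar)^{-1}=J\mathfrak b\,\mathfrak b^{-1}=(\alpha)\Rbar$; hence $\gamma=\alpha u$ for a unit $u\in U(\Rbar)$, and $\alpha u=\gamma\in R$.

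I expect the delicate point to be the middle step of the converse: manufacturing a single element of $R\cap J$ with the prescribed $\mathfrak p$-adic valuation at \emph{all} conductor primes simultaneously. The per-prime version is immediate from the strict inclusion $\mathfrak pJ\subsetneq J$, but the patching relies on the no-splitting consequence of ideal-preservation, so that the relevant primes of $R$ are genuinely distinct and the CRT combination produces no cancellation. If one prefers to avoid $\Pic$ altogether, the same endpoint is reached more elementarily: under local association, writing the image in $\Rbar/I$ of a coprime-to-$I$ generator as $\bar u\,\bar v$ with $u\in U(\Rbar)$ and $\bar v\in U(R/I)$ shows that a principal $\Rbar$-ideal coprime to $I$ has a generator in $R$, and for $\theta\in R$ with $\theta\Rbar$ coprime to $I$ one checks directly that $\theta\Rbar\cap R=\theta R$, again supplying the $\theta$ of the final step.
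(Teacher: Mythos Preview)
The paper does not contain a proof of Theorem~\ref{aiffipla}; it is quoted in the introduction as a result from \cite{subringrelations} and used only as background. There is therefore no ``paper's own proof'' to compare your proposal against.

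As a standalone argument your proof is essentially correct and follows the expected route via the conductor square. The forward direction is clean. In the converse, the two substantive steps are (i) the CRT patching producing a single $\beta\in R\cap J$ with the correct $\mathfrak p$-adic valuation at every conductor prime, and (ii) the passage from ``$\mathfrak b$ principal in $\Rbar$'' to ``$\mathfrak b_0=R\cap\mathfrak b$ principal in $R$'' via injectivity of $\Pic(R)\to\Pic(\Rbar)$. Both are sound, though each leans on facts you invoke rather than prove: for (i), the no-splitting consequence of ideal-preservation is what guarantees the relevant primes of $R$ are distinct, and one should note explicitly that coefficients $c_i\in R$ chosen $\equiv 1$ modulo a high power of $\mathfrak q_i$ and $\equiv 0$ modulo high powers of the others yield $v_{\mathfrak p_i}\bigl(\sum_j c_j\beta_j\bigr)=v_{\mathfrak p_i}(J)$; for (ii), you are invoking Milnor's Mayer--Vietoris sequence for the Milnor square with corners $R,\Rbar,R/I,\Rbar/I$, together with the standard correspondence between $\Rbar$-ideals coprime to $I$ and invertible $R$-ideals coprime to $I$. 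Your final paragraph correctly identifies an alternative that avoids $\Pic$ entirely and stays at the level of elements, which is closer in spirit to how such arguments are often presented in the orders literature.
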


The purpose of this paper is to expand on the discussion in the closing section of \cite{subringrelations}, in which methods for finding associated, ideal-preserving, and locally associated quadratic orders were developed. In \cite[Theorem 5.8]{subringrelations}, a full characterization of ideal-preserving quadratic orders was produced depending only on the prime ideal factorization in $\Rbar$ of the primes $p$ dividing the index $n$; classically, for odd primes $p$, this can be determined using the Legendre symbol $\kron{d}{p}$. An alternate characterization for locally associated quadratic orders was also produced in \cite[Corollary 6.3]{subringrelations} using the following function.
\begin{definition}
    \label{L function}
    We define the function $L:\bN\times \bZ\to \bN$ as follows:
    \begin{enumerate}
        \item $L(1,d)=1$ for every $d\in\bZ$.
        \item For any $k\in\bN$:$$L(2^k,d)=\begin{cases} 2^{k-1},&d\equiv 1\modulo{8};\\3\cdot 2^{k-1},&d\equiv 5\modulo{8};\\2^k,&\ow.\end{cases}$$
        \item For any $k\in\bN$ and odd prime $p$, $L(p^k,d)=p^{k-1}\left(p-\kron{d}{p}\right)$.
        \item If $m,n\in\bN$ are coprime, then $L(mn,d)=L(m,d)\cdot L(n,d)$.
    \end{enumerate}
    That is, for a fixed $d\in\bZ$, the function $L(\cdot,d):\bN\to\bN$ is a multiplicative arithmetic function.
\end{definition}

\begin{theorem}
    \label{locally associated iff}
    Let $R_n$ be the index $n$ order in the quadratic number field $K=\bQ[\sqrt{d}]$, where $d$ is a squarefree positive integer. Let $u$ be the fundamental unit in $\Rbar$, and let $u^m$ be the minimal power of $u$ lying in $R_n$. Then $m|L(n,d)$. Moreover, $R_n$ is a locally associated order if and only if $m=L(n,d)$.
\end{theorem}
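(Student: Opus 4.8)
The plan is to compute both sides of the isomorphism in Definition~\ref{aipla} explicitly for quadratic orders and reduce the locally associated condition to an index comparison of unit groups. First I would fix notation: write $\Rbar = \bZ[\alpha]$, let $u$ be the fundamental unit of $\Rbar$, and let $I = (R_n : \Rbar)$ be the conductor ideal, which for a quadratic order of index $n$ is the principal $\Rbar$-ideal $n\Rbar$ (and $R_n = \bZ + n\Rbar$). The key structural observation is that $U(R_n)$ and $U(\Rbar)$ differ only in their free part: both have torsion subgroup $\{\pm 1\}$ (since $d>0$ and, for $d>1$, there are no extra roots of unity), so $U(\Rbar)/U(R_n)$ is cyclic of order $m$, where $u^m$ is the minimal power of $u$ landing in $R_n$. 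Thus the left-hand side of the displayed isomorphism is $\bZ/m\bZ$.

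Next I would analyze the right-hand side. Since $I = n\Rbar$, we have $\Rbar/I \cong \Rbar/n\Rbar$, a finite ring, and $R_n/I \cong R_n/n\Rbar \cong \bZ/n\bZ$ sits inside it as the image of $\bZ + n\Rbar$. By CRT it suffices to understand $U(\Rbar/n\Rbar)$ and the image of $U(R_n/n\Rbar)$ one prime-power $p^k \,\|\, n$ at a time; multiplicativity then matches the clause $L(mn,d) = L(m,d)L(n,d)$ in Definition~\ref{L function}. For a fixed prime power $p^k$, the ring $\Rbar/p^k\Rbar$ depends only on the splitting type of $p$ in $\Rbar$ — split, inert, or ramified — and in each case $|U(\Rbar/p^k\Rbar)|$ is a standard computation: this is exactly what produces the three-way formula for $L(2^k,d)$ and the factor $p^{k-1}(p - \kron{d}{p})$ for odd $p$. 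Meanwhile $U(\bZ/p^k\bZ)$ has order $p^{k-1}(p-1)$, and one checks the natural map $U(\bZ/p^k\bZ) \to U(\Rbar/p^k\Rbar)$ is injective, so the quotient group on the right has order $|U(\Rbar/n\Rbar)|/|U(\bZ/n\bZ)| =: N$, and a short computation identifies $N$ with $L(n,d)$. (One must be slightly careful that the quotient is $U(\Rbar/I)/U(R_n/I)$ as \emph{groups} — here I would note that $U(R_n/I)$ really is the image of $U(R_n)$, using that $R_n/I$ is a quotient of the local-at-$p$ pieces of $R_n$, and that every unit of $R_n/I$ lifts.)

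With both sides computed, the divisibility $m \mid L(n,d)$ should fall out immediately: the reduction map $U(\Rbar) \to U(\Rbar/I)$ carries $U(R_n)$ into $U(R_n/I)$, hence induces a surjection (in fact the kernel is $1 + I$ intersected appropriately, but surjectivity is what we need) from the cyclic group $U(\Rbar)/U(R_n) \cong \bZ/m\bZ$ onto a subgroup of $U(\Rbar/I)/U(R_n/I)$, whose order is $L(n,d)$. Wait — I need the map to go the right way; more precisely, the induced homomorphism $\bZ/m\bZ = U(\Rbar)/U(R_n) \to U(\Rbar/I)/U(R_n/I)$ has image of some order $m'$ dividing both $m$ and $L(n,d)$, and the locally associated condition is precisely that this map is an \emph{isomorphism}. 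So the real content is: (a) this induced map is always \emph{injective} — equivalently, if $u^j \in R_n \cdot (1+I)$ with $0 \le j < m$ then $u^j \in R_n$, which follows because $1 + I \subseteq R_n$ — giving $m \mid L(n,d)$ automatically once we know the target has order $L(n,d)$; wait, injectivity gives $m \mid L(n,d)$ directly. Hence $R_n$ is locally associated iff the orders agree, i.e. iff $m = L(n,d)$.

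The main obstacle I anticipate is the prime-power local computation of $|U(\Rbar/p^k\Rbar)|$ together with verifying injectivity of $U(\bZ/p^k\bZ) \hookrightarrow U(\Rbar/p^k\Rbar)$ uniformly across the split/inert/ramified cases — in particular the $p = 2$ case, where the congruence class of $d$ modulo $8$ governs which of $\bZ/2^k \times \bZ/2^k$, $\bZ/2^{2k}$ (well, the inert residue ring), or the ramified possibility one lands in, and where $\alpha = (1+\sqrt d)/2$ versus $\alpha = \sqrt d$ changes the presentation of $\Rbar$. Getting the three branches of the $L(2^k,d)$ formula to match exactly, including the factor $3 \cdot 2^{k-1}$ when $d \equiv 5 \pmod 8$, will require careful bookkeeping. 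Everything else — the torsion computation, CRT reduction to prime powers, and the abstract injectivity argument giving $m \mid L(n,d)$ — should be routine.
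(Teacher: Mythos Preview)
The paper does not prove Theorem~\ref{locally associated iff} at all; it is quoted from \cite{subringrelations} as background for the rest of the work. So there is no in-paper argument to compare your proposal against, and your outline should be judged on its own.

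Your approach is correct and is the natural one. A few clean-ups are worth noting. First, the concern that ``$U(R_n/I)$ really is the image of $U(R_n)$'' is a red herring: Definition~\ref{aipla} uses the unit group of the quotient ring $R_n/I$, which you have already identified with $\bZ/n\bZ$ via $R_n=\bZ+n\Rbar$, so no lifting statement is required. Second, the injectivity of the induced map $U(\Rbar)/U(R_n)\to U(\Rbar/I)/U(R_n/I)$ is simpler than your back-and-forth suggests: if $v\in U(\Rbar)$ reduces into $U(R_n/I)$ then in particular $\bar v\in R_n/I$, hence $v\in R_n+I=R_n$, and $R_n\cap U(\Rbar)=U(R_n)$ because $\Rbar$ is integral over $R_n$. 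Lagrange then gives $m\mid L(n,d)$ immediately. Injectivity actually yields a bit more: $m$ divides the \emph{exponent} of the right-hand group, so if $m=L(n,d)$ that group is forced to be cyclic and the natural map is an isomorphism; conversely any isomorphism forces equal orders. Thus the two possible readings of ``$\cong$'' in Definition~\ref{aipla} coincide here, and you need not worry about which is intended. Finally, the split/inert/ramified case analysis computing $|U(\Rbar/p^k\Rbar)|/|U(\bZ/p^k\bZ)|$ is exactly what produces the clauses of Definition~\ref{L function}; the $p=2$ bookkeeping you flag as the main obstacle is routine once you note that $d\bmod 8$ determines whether $2$ splits, is inert, or ramifies in $\Rbar$.
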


Thus when determining whether the order $R_n$ in $\bQ[\sqrt{d}]$ is locally associated, we need only calculate the function value $L(n,d)$ and determine the minimal power $u^m$ of the fundamental unit $u$ which lies in $R_n$. This was used in \cite[Theorem 6.4]{subringrelations} to produce a full characterization of locally associated orders in non-real quadratic number fields (i.e. those with $d<0$) and to produce a table of orders which are associated, ideal-preserving, or locally associated, which can be found at \cite{quadla}.

This paper seeks to determine when real quadratic number fields are locally associated, particularly in the case when $K=\bQ[\sqrt{p}]$ for a prime $p$. In order to do so, the table at \cite{quadla} was used to determine patterns in the list of locally associated orders. These patterns were then used to make conjectures and ultimately produce the results in this paper. The main result, Theorem \ref{main result}, characterizes many cases when orders in fields of the form $K=\bQ[\sqrt{p}]$ are (or are not) locally associated. This result is as follows.

\begin{customthm}{\ref{main result}}
	Let $R_n$ be the index $n=q^k$ order in the (real) quadratic number field $K=\bQ[\sqrt{p}]$, where $p,q\in\bN$ are prime and $k\in\bN$.

     If any of the following conditions hold, $R_n$ is locally associated.
     \begin{enumerate}
         \item $n=2$ and $p\not\equiv 5\modulo{8}$.
         \item $n=4$ and $p\equiv 1\modulo{8}$.
         \item $n=3$ and $p\not\equiv 3\modulo{4}$.
     \end{enumerate}
     Moreover,
     \begin{enumerate}
         \item[(4)] $R_{p^k}$ is locally associated for every $k\in\bN$ if and only if $R_p$ is locally associated.
         \item[(5)] When $p\equiv 5\modulo{8}$, $R_4$ is locally associated if and only if $R_2$ is locally associated.
         \item[(6)] If $q$ is odd and $p\neq q$, then $R_{q^k}$ is locally associated for every $k\in\bN$ if and only if $R_{q^2}$ is locally associated.
     \end{enumerate}
     If any of the following conditions hold, $R_n$ is NOT locally associated.
     \begin{enumerate}
         \item[(7)] $q$ is odd, $p\neq q$, and either $p\equiv 3\modulo{4}$ or $q\equiv 1\modulo{4}$.
         \item[(8)] $n=4$ and $p\equiv 3\modulo{4}$.
         \item[(9)] $n=8$ and $p$ is odd.
     \end{enumerate}
\end{customthm}

The remainder of this paper is laid out as follows. In the second section, we develop a list of general results and lemmata that will be used later. In the third section, we produce specific results which can be used to determine when an order is locally associated. In the fourth section, we discuss cases that are yet undetermined and their relationship with Pell's equation. In the fifth and final section, we conclude with an exploration of potential future directions for this research.

\section{Preliminary Results}
In the interest of simplifying later proofs, we now collect a number of general results which we will reuse. Since determining when an order is locally associated requires using the fundamental unit in the ring of integers $\Rbar$, we begin by presenting useful results regarding the units in $\Rbar$. 

Recall that for any element $\alpha=a+b\sqrt{d}\in \bQ[\sqrt{d}]$, the norm of $\alpha$ is defined to be $N(\alpha)=a^2-b^2d$. Moreover, $\alpha\in \Rbar$ is a unit if and only if $N(\alpha)=\pm 1$. The following propositions give more information on the norm of the fundamental unit; for detailed proofs of these statements, see \cite[Chapter 11]{introalgnumthe}.

\begin{proposition}
    \label{unit norm 3 mod 4}
    Let $d\in\bZ$ be squarefree and $K=\bQ[\sqrt{d}]$. If $d$ has a prime divisor $p$ satisfying $p\equiv 3\modulo{4}$, then the fundamental unit $u$ in $K$ has norm $N(u)=1$.
\end{proposition}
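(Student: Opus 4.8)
The plan is to argue by contradiction: suppose the fundamental unit $u$ of $\Rbar$ has norm $N(u) = -1$, and derive a contradiction from the hypothesis that some prime $p \equiv 3 \modulo 4$ divides $d$. First I would record the standard reduction that $N(u) = -1$ holds if and only if $\Rbar$ contains \emph{some} unit of norm $-1$. This is because every unit of $\Rbar$ has the form $\pm u^k$ for some $k \in \bZ$, and $N(\pm u^k) = N(u)^k$; so a unit of norm $-1$ exists precisely when $N(u) = -1$. Hence it suffices to show $\Rbar$ has no unit of norm $-1$ at all.

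Next, the plan is to write an arbitrary element of $\Rbar$ in the uniform shape $\alpha = \frac{a + b\sqrt d}{2}$ with $a, b \in \bZ$ (both even when $d \not\equiv 1 \modulo 4$, of the same parity when $d \equiv 1 \modulo 4$), so that $N(\alpha) = \frac{a^2 - b^2 d}{4}$. If $\alpha$ were a unit of norm $-1$, then $a^2 - b^2 d = -4$. Since $p \mid d$, reducing this equation modulo $p$ yields $a^2 \equiv -4 \modulo p$. Because $p \equiv 3 \modulo 4$ the prime $p$ is odd, so $p \nmid 4$, and therefore $p \nmid a$ as well (otherwise $p \mid 4$); thus $-4$ is a nonzero quadratic residue modulo $p$.

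Finally I would invoke multiplicativity of the Legendre symbol: $\kron{-4}{p} = \kron{-1}{p}\kron{4}{p} = \kron{-1}{p}$, since $4$ is a perfect square. The previous step forces $\kron{-1}{p} = 1$, i.e. $-1$ is a quadratic residue modulo $p$. But the classical evaluation $\kron{-1}{p} = (-1)^{(p-1)/2}$ gives $\kron{-1}{p} = -1$ whenever $p \equiv 3 \modulo 4$, a contradiction. Hence $\Rbar$ has no unit of norm $-1$, and so $N(u) = 1$.

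There is no deep obstacle here — the argument is elementary congruence bookkeeping. The only point that requires a little care is treating the "half-integer" elements of $\Rbar$ (present when $d \equiv 1 \modulo 4$) on the same footing as ordinary integers; phrasing everything through the equation $a^2 - b^2 d = \pm 4$ rather than $\pm 1$ handles both cases at once, and the extra factor of $4$ is invisible modulo the odd prime $p$.
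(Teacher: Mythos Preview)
Your argument is correct and is exactly the standard textbook proof. Note, however, that the paper does not supply its own proof of this proposition: it simply states the result and refers the reader to Chapter~11 of \cite{introalgnumthe} for details. So there is no ``paper's proof'' to compare against beyond observing that your congruence argument modulo $p$ (reducing $a^2 - b^2 d = -4$ to the impossibility of $\kron{-1}{p} = 1$ when $p\equiv 3\modulo 4$) is precisely the classical one found in such references.
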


\begin{proposition}
    \label{unit norm prime 1 mod 4}
    Let $p\equiv 1\modulo{4}$ be prime and $K=\bQ[\sqrt{p}]$. Then the fundamental unit $u$ in $K$ has norm $N(u)=-1$.
\end{proposition}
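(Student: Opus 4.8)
The plan is to reduce the statement to the solvability of the negative Pell equation $x^2-py^2=-1$ in integers, and then to establish that solvability by an elementary descent on the ordinary Pell equation $x^2-py^2=1$. The reduction is immediate: if $(r,s)\in\bZ^2$ satisfies $r^2-ps^2=-1$, then $\mu=r+s\sqrt{p}$ lies in $\bZ[\sqrt{p}]\subseteq\cO_K$ and is a unit with $N(\mu)=-1$; since every unit of $\cO_K$ has the form $\pm u^k$, this gives $-1=N(\mu)=N(u)^k$, which forces $N(u)=-1$. So the entire problem is to produce one such pair $(r,s)$.

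To do this I would let $(a,b)$ be the minimal positive solution of $x^2-py^2=1$ (which exists since $p$ is not a perfect square), so that $pb^2=(a-1)(a+1)$. The first key step is to show that $a$ is odd: if $a$ were even then $pb^2=a^2-1$ would be odd, forcing $b$ odd, and then $pb^2\equiv 1\modulo{4}$ while $a^2-1\equiv 3\modulo{4}$, a contradiction; this is precisely the step where the hypothesis $p\equiv 1\modulo{4}$ is essential. Given that $a$ is odd, $\tfrac{a-1}{2}$ and $\tfrac{a+1}{2}$ are consecutive integers, hence coprime; since $4\mid(a-1)(a+1)=pb^2$ and $p$ is odd, $b$ is even, say $b=2c$, and dividing by $4$ gives $\tfrac{a-1}{2}\cdot\tfrac{a+1}{2}=pc^2$. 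Because $p$ is prime and the two factors on the left are coprime with product $p$ times a square, one of them is a perfect square $r^2$ and the other is $p$ times a perfect square $ps^2$, with $r,s\ge 1$; this is the step that uses primality of $p$.

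It then remains to analyze the two cases. If $\tfrac{a+1}{2}=r^2$ and $\tfrac{a-1}{2}=ps^2$, then $r^2-ps^2=1$; but $ps^2\ge p\ge 5$ forces $a\ge 11$, so $r=\sqrt{(a+1)/2}<a$, contradicting minimality of $(a,b)$. Hence the other case must hold: $\tfrac{a-1}{2}=r^2$ and $\tfrac{a+1}{2}=ps^2$, which yields exactly $r^2-ps^2=-1$, the sought solution; by the opening reduction, $N(u)=-1$.

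I expect the main obstacle to be the congruence and parity bookkeeping in the middle step — forcing $a$ odd and then cleanly extracting the factor of $4$ — together with verifying that the degenerate possibilities (such as $a=1$ or $s=0$) are genuinely excluded, so that the descent really does contradict the minimality of $(a,b)$ rather than merely reproducing the trivial solution. Everything else is routine.
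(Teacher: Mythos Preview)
Your argument is correct. The paper does not actually prove this proposition; it simply states it and refers the reader to Chapter~11 of \cite{introalgnumthe} for a detailed proof. So there is no in-paper argument to compare against.

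That said, the descent you give is the classical one (essentially Dirichlet's proof that the negative Pell equation $x^2-py^2=-1$ is solvable for primes $p\equiv 1\pmod 4$), and all the steps check out: the parity argument forcing $a$ odd genuinely uses $p\equiv 1\pmod 4$; the coprimality of $\tfrac{a\pm 1}{2}$ and the primality of $p$ give the clean square/$p$-times-square split; and the minimality contradiction in the ``wrong'' case is sound once you note $s\ge 1$ (since $a>1$) and $r<a$. Your closing caveat about excluding $a=1$ and $s=0$ is exactly the point that needs to be said, and you have said it. The only cosmetic remark is that your bound ``$a\ge 11$'' is stronger than necessary---any $a\ge 3$ already gives $r^2=(a+1)/2<a^2$---but this does no harm.
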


In certain cases, we may need to be more specific about the form of the fundamental unit than just knowing the norm. The following lemmata address this.

\begin{lemma}
    \label{fundamental unit 3 mod 4}
    Let $p$ be a prime number satisfying $p\equiv 3\modulo 4$. Then the fundamental unit in the number field $K=\bQ[\sqrt{p}]$ is of the form $u=a+b\sqrt{p}$, with $a$ an even integer and $b$ an odd integer.
\end{lemma}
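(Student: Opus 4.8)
The plan is to work with the ring of integers $\Rbar = \bZ[\sqrt{p}]$ (since $p \equiv 3 \pmod 4$ means $p \not\equiv 1 \pmod 4$, so $\alpha = \sqrt{p}$ and there are no half-integer coefficients to worry about). Write the fundamental unit as $u = a + b\sqrt{p}$ with $a, b \in \bZ$. By Proposition \ref{unit norm 3 mod 4}, since $p \equiv 3 \pmod 4$ divides itself, we have $N(u) = 1$, i.e. $a^2 - b^2 p = 1$. I would then reduce this relation modulo small powers of $2$ and modulo $p$ to pin down the parities of $a$ and $b$.

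First I would show $b$ is odd. Reducing $a^2 - b^2 p = 1$ modulo $p$ gives $a^2 \equiv 1 \pmod p$. Reducing modulo $4$: since $p \equiv 3 \pmod 4$, we get $a^2 - 3b^2 \equiv 1 \pmod 4$, i.e. $a^2 + b^2 \equiv 1 \pmod 4$ (as $-3 \equiv 1$). The squares mod $4$ are $0$ and $1$, so $(a^2, b^2) \bmod 4$ must be $(1, 0)$ or $(0, 1)$; in either case exactly one of $a, b$ is even and the other odd. So it remains to rule out the case "$a$ odd, $b$ even." The key step — and the main obstacle — is exactly this: showing that $b$ cannot be even, equivalently that $a$ cannot be odd.

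To handle this, suppose for contradiction that $a$ is odd and $b$ is even. The idea is to produce a smaller unit, contradicting that $u$ is fundamental. Consider working in $\Rbar$ and examining $u$ together with its relationship to elements of the form $\frac{a + b\sqrt p}{2}$ or using the structure $a^2 - 1 = b^2 p$, i.e. $(a-1)(a+1) = b^2 p$. Since $a$ is odd, $a - 1$ and $a + 1$ are consecutive even integers, so $\gcd(a-1, a+1) = 2$, and writing $a - 1 = 2s$, $a + 1 = 2t$ with $\gcd(s,t) = 1$ and $t = s + 1$, we get $4st = b^2 p$ with $b$ even, say $b = 2c$, giving $st = c^2 p$. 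Then $p$ divides exactly one of $s, t$ (they're coprime), and a descent/parametrization argument produces integers yielding a unit $a' + b'\sqrt p$ with $0 < a' + b'\sqrt p < u$ — for instance by solving $s$ or $t$ as a perfect square times a factor and reassembling. I expect the cleanest route is: from the coprime factorization one of $\{s, t\}$ is a square $e^2$ and the other is $p f^2$ (up to the usual case analysis), and then $e^2 - pf^2 = \pm(t - s) \cdot(\text{something}) = \pm 1$ after accounting for $t - s = 1$, exhibiting $e + f\sqrt p$ as a unit strictly smaller than $u$. Care must be taken with the exact bookkeeping of which factor absorbs the $p$ and the sign, but in every branch one lands on a nontrivial unit below the fundamental one, a contradiction. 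Hence $a$ is even and $b$ is odd, as claimed.

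An alternative, possibly slicker, finish for the obstacle step: if $a$ were odd and $b$ even, then modulo $8$ we would have $a^2 \equiv 1$ and $b^2 p \equiv 0$ or $4p \equiv 4 \cdot 3 \equiv 4 \pmod 8$ depending on $b \bmod 4$, and combined with $a^2 - b^2 p = 1$ this forces constraints that, together with the minimality of the fundamental solution to Pell's equation $x^2 - py^2 = 1$ (which is the square of the fundamental solution to $x^2 - py^2 = -1$ when the latter is solvable, or the fundamental unit itself otherwise), can be pushed to a contradiction. I would try the descent argument first since it does not require case analysis on the solvability of the negative Pell equation, and fall back on the congruence/minimality approach if the descent bookkeeping proves unwieldy.
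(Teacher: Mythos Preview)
Your proposal is correct and follows essentially the same descent argument as the paper: after reducing to the case $a$ odd, $b$ even, the paper likewise factors $(a-1)(a+1)=b^2p$, uses $\gcd\!\left(\frac{a-1}{2},\frac{a+1}{2}\right)=1$ to write the two halves as $x^2$ and $py^2$, and then observes directly that $(x+y\sqrt{p})^2=a+b\sqrt{p}=u$, contradicting fundamentality. The only difference is cosmetic---the paper carries out the bookkeeping you flag and lands cleanly on the square relation $(x+y\sqrt{p})^2=u$ rather than merely asserting a ``smaller unit.''
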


\begin{proof}
    Let $u=a+b\sqrt{p}$ be the fundamental unit in $K$. Since $u$, $-u$, $u^{-1}=N(u)(a-b\sqrt{p})$, and $-u^{-1}=-N(u)(a-b\sqrt{p})$ could all equivalently be chosen as the fundamental unit and one of these elements must necessarily be of the form $c+d\sqrt{p}$ with $c$ and $d$ both positive, we can assume without loss of generality that $a,b\in\bN$. Furthermore, since $p$ is an odd prime and $N(u)=a^2-b^2p=1$ (by Proposition \ref{unit norm 3 mod 4}), then $a$ and $b$ must be of opposite parity. Assume toward a contradiction that $a$ is odd and $b$ is even. Using the above norm equation, $(a+1)(a-1)=a^2-1=b^2p$. Note that $a+1$, $a-1$, and $b$ are all even numbers, so dividing both sides by 4 yields $$\frac{a+1}{2}\cdot \frac{a-1}{2}=\left(\frac{b}{2}\right)^2p.$$ Since $\frac{a+1}{2}$ and $\frac{a-1}{2}$ are integers whose product is divisible by the prime number $p$, then necessarily one of them will be divisible by $p$. We will continue under the assumption that $\frac{a+1}{2}$ is divisible by $p$; the other case will follow in an almost identical manner.

    Dividing both sides of the above equation by $p$ gives $$\frac{a+1}{2p}\cdot\frac{a-1}{2}=\left(\frac{b}{2}\right)^2.$$ Now note that $a+1$ and $a-1$ are integers which differ by 2. In particular, since these are both even numbers, $\gcd(a+1,a-1)=2$. Thus, $\frac{a+1}{2p}$ and $\frac{a-1}{2}$ must be coprime. Since the product of these two coprime integers is a perfect square, then $\frac{a+1}{2p}$ and $\frac{a-1}{2}$ must themselves be perfect squares; that is, $x:=\sqrt{\frac{a-1}{2}}$ and $y:=\sqrt{\frac{a+1}{2p}}$ are both integers. Then $x+y\sqrt{p}\in\Rbar$ with $(x+y\sqrt{p})^2=(x^2+y^2p)+2xy\sqrt{p}=a+b\sqrt{p}=u$. Then $u$ is the square of an element in $\Rbar$, contradicting the fact that $u$ is the fundamental unit in $K$. Then $a$ must be even and $b$ must be odd.
\end{proof}

\begin{lemma}
    \label{fundamental unit 1 mod 8}
    Let $p$ be a prime number satisfying $p\equiv 1\modulo{8}$. Then the fundamental unit in the number field $K=\bQ[\sqrt{p}]$ is of the form $u=a+b\sqrt{p}$, with $4|a$ and $b$ odd.
\end{lemma}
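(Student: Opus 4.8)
The plan is to first reduce to the case where $u$ has integer coordinates, and then read off both divisibility conditions from a single norm equation. Since $p\equiv 1\modulo{4}$, Proposition~\ref{unit norm prime 1 mod 4} applies and tells us the fundamental unit $u$ of $K$ has $N(u)=-1$. Because $p\equiv 1\modulo{4}$, a general element of $\Rbar=\bZ\!\left[\tfrac{1+\sqrt p}{2}\right]$ has the form $u=\tfrac{a+b\sqrt p}{2}$ with $a,b\in\bZ$ and $a\equiv b\modulo{2}$, and the condition $N(u)=-1$ becomes $a^2-b^2p=-4$.

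Next I would rule out the possibility that $a$ and $b$ are both odd. In that case $a^2\equiv b^2\equiv 1\modulo{8}$, and since $p\equiv 1\modulo{8}$ this forces $a^2-b^2p\equiv 1-1\equiv 0\modulo{8}$, contradicting $a^2-b^2p=-4$. Hence $a$ and $b$ are both even, and dividing through we may write $u=a+b\sqrt p$ with $a,b\in\bZ$ and $a^2-b^2p=-1$.

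From here the two claimed conditions drop out of elementary congruence bookkeeping. Reducing $a^2-b^2p=-1$ modulo $4$: if $b$ were even we would get $a^2\equiv -1\equiv 3\modulo{4}$, which is impossible, so $b$ is odd. Then $b^2\equiv 1\modulo{8}$, whence $a^2=b^2p-1\equiv p-1\equiv 0\modulo{8}$; since a square is $\equiv 0\modulo{8}$ precisely when its base is divisible by $4$, we conclude $4\mid a$, as desired.

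I expect the only genuinely delicate step to be the first one, showing that $u$ cannot have honest half-integer coordinates; this is exactly the point at which the hypothesis $p\equiv 1\modulo{8}$ is used rather than merely $p\equiv 1\modulo{4}$. In contrast with Lemma~\ref{fundamental unit 3 mod 4}, no infinite-descent or square-root argument is needed here, because the norm value $-1$ already pins down the relevant residues directly.
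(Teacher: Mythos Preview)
Your proof is correct and follows essentially the same approach as the paper's: both first use $N(u)=-1$ together with $p\equiv 1\pmod 8$ to rule out genuinely half-integer coordinates via a congruence mod $8$, and then read off $4\mid a$ and $b$ odd from the reduced norm equation $a^2-b^2p=-1$ by elementary residue considerations. The only cosmetic difference is that the paper handles the second step in one pass modulo $8$, whereas you split it into a mod-$4$ step (to force $b$ odd) followed by a mod-$8$ step (to force $4\mid a$).
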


\begin{proof}
    Since $p\equiv 1\modulo{8}$, then $\Rbar=\bZ[\frac{1+\sqrt{p}}{2}]$. Thus, we know that the fundamental unit must be of the form $u=\frac{c+d\sqrt{p}}{2}$ with $c,d\in\bZ$, $c\equiv d\modulo{2}$. Since $N(u)=-1$ by Proposition \ref{unit norm prime 1 mod 4}, $c^2-d^2p=-4$. Considering this equation modulo 8, we note that $c^2-d^2\equiv 4\modulo{8}$. As the only squares modulo $8$ are $0$, $1$, and $4$, we can see by inspection $c$ and $d$ must both be even. Thus, $u=a+b\sqrt{p}$ with $a=\frac{c}{2}\in\bZ$ and $b=\frac{d}{2}\in\bZ$.

    Now $a^2-b^2p=-1$; considering this equation modulo 8, we get $a^2-b^2\equiv -1\modulo{8}$. Again, the only squares modulo 8 are $0$, $1$, and $4$, so the only possibility is that $a^2\equiv 0\modulo{8}$ and $b^2\equiv 1\modulo{8}$. Therefore, $4|a$ and $b$ is odd.
\end{proof}

We now turn our attention to the locally associated property itself. In particular, the following results will help us determine when a given order $R$ is (or is not) locally associated based on our knowledge of other related orders. The first is arguably the most important and comes from \cite[Corollary 5.4]{subringrelations}.

\begin{proposition}
    \label{la in towers}
    Let $R$ be a locally associated order in a number field $K$. Then if $S$ is an intermediate order to $R$, i.e. $R\subseteq S\subseteq \Rbar$, $S$ is also locally associated.
\end{proposition}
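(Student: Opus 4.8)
The plan is to use Theorem~\ref{locally associated iff} to reduce the statement to a purely multiplicative fact about the function $L$. Write $K = \bQ[\sqrt{d}]$, let $n$ be the index of $R$ and $\ell$ the index of $S$, so that $R = R_n$, $S = R_\ell$, and the inclusion $R \subseteq S$ forces $\ell \mid n$. Let $u$ be the fundamental unit of $\Rbar$, let $u^m$ be the minimal power of $u$ lying in $R_n$, and let $u^k$ be the minimal power of $u$ lying in $R_\ell$. Since $R_n \subseteq R_\ell$, every power of $u$ in $R_n$ is also in $R_\ell$, so $k \mid m$. By hypothesis $R_n$ is locally associated, so Theorem~\ref{locally associated iff} gives $m = L(n,d)$; again by that theorem, $k \mid L(\ell, d)$, and we want to upgrade this to $k = L(\ell, d)$.

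First I would prove the key divisibility: if $\ell \mid n$ then $L(\ell, d) \mid L(n, d)$. This follows directly from Definition~\ref{L function} by working prime-by-prime, using multiplicativity (item~(4)). For an odd prime $p$ with $p^b \| \ell$ and $p^c \| n$ (so $b \le c$), the local factor at $p$ changes from $p^{b-1}(p - \kron{d}{p})$ to $p^{c-1}(p - \kron{d}{p})$ when $b \ge 1$, and the former divides the latter since $b - 1 \le c - 1$; if $b = 0$ the local factor of $\ell$ at $p$ is $1$, which divides anything. The prime $2$ is handled similarly: for each residue class of $d$ modulo $8$ the factor $L(2^b, d)$ has the form (constant)$\cdot 2^{b-1}$ with the same constant as $L(2^c, d)$, so $L(2^b,d) \mid L(2^c,d)$ whenever $b \le c$ (and the $b=0$ case is trivial). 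Multiplying these local divisibilities gives $L(\ell,d) \mid L(n,d)$.

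Now combine the pieces. We have $k \mid m$ and $k \mid L(\ell, d)$; we also have $L(\ell, d) \mid L(n, d) = m$. I claim $k = L(\ell, d)$. The remaining ingredient is a lower bound forcing $k$ to be large; the natural one is that $u^{L(\ell,d)} \in R_\ell$ automatically — that is, the divisibility $k \mid L(\ell, d)$ of Theorem~\ref{locally associated iff} should be paired with the observation that in fact $u^{L(\ell, d)}$ lies in $R_\ell$ regardless of whether $R_\ell$ is locally associated (this is presumably part of how Theorem~\ref{locally associated iff} is established, or follows from a statement that $u^{L(n,d)}\in R_n$ always). Granting that, since $u^m = u^{L(n,d)} \in R_n \subseteq R_\ell$ we would want instead to exhibit that no smaller power works. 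Here is the clean argument: $u^k$ is the minimal power in $R_\ell$, and $u^{L(\ell,d)} \in R_\ell$, so $k \mid L(\ell,d)$; on the other hand $u^k \in R_\ell \subseteq \Rbar$ and $u^k$'s minimality in $R_n$ forces $m \mid (\text{any power of } u \text{ in } R_n)$, but $u^k$ need not lie in $R_n$. Instead, note $u^m\in R_n\subseteq R_\ell$ forces $k\mid m = L(n,d)$; combined with $k \mid L(\ell,d)$ this alone is not enough, so the real content is: the index of $U(R_\ell)$ in $U(\Rbar)$ equals $k$, that of $U(R_n)$ equals $m=L(n,d)$, and $U(R_n)\subseteq U(R_\ell)$ gives $k\mid m$ with quotient $[U(R_\ell):U(R_n)]$.

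The cleanest route, and the one I expect to be the main obstacle to get exactly right, is to avoid re-deriving the unit-index bookkeeping and instead invoke Proposition~\ref{la in towers} directly: $R_n \subseteq R_\ell \subseteq \Rbar$ and $R_n$ locally associated imply $R_\ell$ is locally associated, hence by Theorem~\ref{locally associated iff} applied to $R_\ell$ we get $k = L(\ell, d)$ immediately. Wait — that is circular if the present statement \emph{is} Proposition~\ref{la in towers}; re-reading, the statement to prove is precisely Proposition~\ref{la in towers}, attributed to \cite{subringrelations}. So the honest plan is the unit-theoretic one: show $U(R) \subseteq U(S) \subseteq U(\Rbar)$, use Theorem~\ref{aiffipla}-adjacent bookkeeping or a direct argument on the conductor ideals $I_R = (R:\Rbar) \subseteq I_S = (S:\Rbar)$ to compare the quotient groups in Definition~\ref{aipla}, and verify that the isomorphism $\quot{U(\Rbar)}{U(R)} \cong \quot{U(\sfrac{\Rbar}{I_R})}{U(\sfrac{R}{I_R})}$ descends to the analogous isomorphism for $S$. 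The hard part will be tracking how passing from $I_R$ to the larger ideal $I_S$ interacts with the unit groups mod conductor — specifically showing the natural map $\quot{U(\Rbar)}{U(S)} \to \quot{U(\sfrac{\Rbar}{I_S})}{U(\sfrac{S}{I_S})}$ is an isomorphism given that it is for $R$ — which is essentially a diagram chase using the surjection $\sfrac{\Rbar}{I_R} \twoheadrightarrow \sfrac{\Rbar}{I_S}$ and its restriction to $R$ and $S$.
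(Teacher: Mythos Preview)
The paper does not actually give a proof of this proposition; it is quoted from \cite{subringrelations} and used as a black box. So there is no ``paper's proof'' to compare against, and your task is really to produce an independent argument.

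Two issues with your proposal. First, the statement is for an arbitrary number field $K$, while your main line of attack via Theorem~\ref{locally associated iff} and the function $L(\cdot,d)$ only applies to real quadratic fields. At best this would give the special case used later in the paper, not the proposition as stated.

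Second, even in the real quadratic case your argument does not close. You correctly establish $k\mid L(\ell,d)$, $k\mid m=L(n,d)$, and $L(\ell,d)\mid L(n,d)$, and you correctly diagnose that these three divisibilities alone cannot force $k=L(\ell,d)$: nothing prevents $k$ from being a proper divisor of $L(\ell,d)$. You then reach for Proposition~\ref{la in towers} itself, catch the circularity, and retreat to a sketch of the conductor-ideal diagram chase without carrying it out. That final sketch is the right shape --- a commuting square with $U(\Rbar)/U(R)\to U(\Rbar/I_R)/U(R/I_R)$ on top, the analogous map for $S$ on the bottom, surjections down each side --- but the substantive step you flag (surjectivity of the right vertical map, or equivalently that reduction $\Rbar/I_R\twoheadrightarrow\Rbar/I_S$ induces a surjection on the relevant unit quotients) is exactly where the work lies, and you do not do it. As written, the proposal identifies the correct general strategy but stops at the point where the actual argument begins.
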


For the purposes of this paper, Proposition \ref{la in towers} will allow us to limit our focus. For instance, if we are able to show that the index 2 order $R_2$ in a number field $K$ is not locally associated, then we will not need to concern ourselves with checking $R_4$ or $R_6$ in $K$. Since these orders both contain $R_2$, the contrapositive of Proposition \ref{la in towers} immediately tells us they cannot be locally associated. The next theorem will allow us to limit our focus even further.

\begin{lemma}
     Let $d$ be a squarefree integer, $K=\bQ[\sqrt{d}]$, $R$ an order in $K$, and $u$ the fundamental unit in $K$. If $u^m$ is the minimal power of $u$ lying in $R$ and $u^a\in R$ for some $a\in\bN$, then $m|a$.
\end{lemma}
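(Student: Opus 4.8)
The plan is to show that the set $S:=\{k\in\bZ : u^k\in R\}$ is a subgroup of $(\bZ,+)$. Granting this, the lemma is immediate: a nontrivial subgroup of $\bZ$ is of the form $m_0\bZ$, where $m_0$ is its least positive element, and since $m$ is by hypothesis the least positive integer with $u^m\in R$, we get $S=m\bZ$; hence $u^a\in R$ forces $a\in m\bZ$, i.e.\ $m\mid a$. (Equivalently, one can phrase this via the division algorithm, as explained at the end.)

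That $0\in S$ (since $u^0=1\in R$) and that $S$ is closed under addition (since $u^{j}u^{k}=u^{j+k}$ and $R$ is closed under multiplication) is clear. The substance of the argument is closure under negation, i.e.\ the implication $u^k\in R\Rightarrow u^{-k}\in R$. For this I would use that $R$ is stable under the nontrivial $\bQ$-automorphism $\sigma$ of $K$, namely $\sigma(a+b\sqrt d)=a-b\sqrt d$: since $\sigma$ is a ring automorphism of $K$ fixing $\bZ$, it carries the unique order of a given index to an order of the same index, hence to itself (equivalently, if $\alpha$ is the standard generator of $\Rbar$ and $t\in\bZ$ is its trace, then $\sigma(\alpha)=t-\alpha$, so $\sigma(n\alpha)=nt-n\alpha$ lies in $\bZ+n\bZ\alpha=R$). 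Combining this with the identity $\beta\,\sigma(\beta)=N(\beta)$ applied to $\beta=u^k$, together with $N(u)=\pm 1$, gives $\sigma(u^k)=N(u)^k\,u^{-k}=\pm u^{-k}$. Thus if $u^k\in R$ then $\sigma(u^k)\in R$, and therefore $u^{-k}=\pm\sigma(u^k)\in R$, as needed.

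I expect the only step requiring any care to be the conjugation-stability of $R$; everything else is formal. If one prefers to avoid the group-theoretic phrasing, the same two ingredients give a direct argument: from $u^a\in R$, write $a=qm+r$ with $0\le r<m$; since $u^m\in R$ implies $(u^m)^{-1}\in R$ by the negation step above, we obtain $u^{r}=u^{a}\,(u^m)^{-q}\in R$, and the minimality of $m$ forces $r=0$, so $m\mid a$.
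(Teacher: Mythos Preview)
Your proof is correct and follows the same division-algorithm skeleton as the paper: write $a=qm+r$, show $u^r\in R$, and invoke minimality of $m$. The one substantive difference is in how you justify that $(u^m)^{-1}\in R$ (equivalently, closure of $S$ under negation): you use stability of $R$ under the nontrivial Galois automorphism $\sigma$ together with $\sigma(u^k)=\pm u^{-k}$, whereas the paper invokes the standard fact that in an integral extension $R\subseteq\Rbar$, any element of $R\cap U(\Rbar)$ is already a unit in $R$. Both arguments are short and valid; the paper's integrality argument is more portable (it applies verbatim to orders in arbitrary number fields without reference to an explicit automorphism), while your conjugation argument is pleasantly concrete and specific to the quadratic setting.
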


\begin{proof}
    Let $u^m$ be the minimal power of $u$ lying in $R$; that is, $m$ is the order of the element $u\cdot U(R)$ in the quotient group $\quot{U(\Rbar)}{U(R)}$. The result immediately follows from a well-known property of the order of a group element.
\end{proof}

\begin{lemma}
    \label{lcm power}
    Let $d$ be a positive squarefree integer, $K=\bQ[\sqrt{d}]$, and $u$ the fundamental unit in $K$. Let $m$ and $n$ be coprime positive integers and denote by $R_m$ and $R_n$ the orders of index $m$ and $n$ in $K$, respectively. Then $R_m\cap R_n=R_{mn}$, the index $mn$ order in $K$. Furthermore, if $u^r$ and $u^s$ are the minimal powers of $u$ lying in $R_m$ and $R_n$, respectively, then the minimal power of $u$ lying in $R_{mn}$ is $u^{\textup{LCM}(r,s)}$.
\end{lemma}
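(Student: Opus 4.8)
The plan is to settle the two assertions separately, deriving both from the elementary $\bZ$-module description of the orders $R_k$. The key observation is that if $\{1,\alpha\}$ is the standard integral basis of $\Rbar$ (so $\alpha=\sqrt d$, or $\alpha=\frac{1+\sqrt d}{2}$ when $d\equiv 1\modulo 4$), then $R_k=\bZ+k\Rbar=\bZ\oplus\bZ(k\alpha)$ as abelian groups; consequently an element $x=a+b\alpha\in\Rbar$ (with $a,b\in\bZ$) lies in $R_k$ \emph{if and only if} $k\mid b$. I would establish this membership test first --- the verification that $\bZ[n\alpha]=\bZ\oplus\bZ(n\alpha)$ just uses that $\alpha$ satisfies a monic integral quadratic, so $(n\alpha)^2$ lies in $\bZ+\bZ(n\alpha)$ --- and then both parts of the lemma fall out quickly.

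For the first claim, take $x=a+b\alpha\in\Rbar$. By the membership test, $x\in R_m\cap R_n$ exactly when $m\mid b$ and $n\mid b$, i.e. when $\lcm(m,n)\mid b$; since $\gcd(m,n)=1$ this is the condition $mn\mid b$, which says precisely that $x\in R_{mn}$. Hence $R_m\cap R_n=R_{mn}$. (Alternatively, one can note that an intersection of orders is again an order, so $R_m\cap R_n=R_k$ for some $k\in\bN$, and the containments $R_{mn}\subseteq R_m\cap R_n\subseteq R_m,R_n$ pin down $k=\lcm(m,n)=mn$; I would keep the basis argument as the main one since it is self-contained.)

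For the second claim, first note that the set of positive integers $t$ with $u^t\in R_m$ is exactly the set of positive multiples of $r$: the preceding lemma gives $r\mid t$, and conversely $u^{kr}=(u^r)^k\in R_m$ for every $k\geq 1$, because $u^r\in R_m\cap U(\Rbar)=U(R_m)$ (as $\Rbar$ is integral over $R_m$, any element of $R_m$ that is a unit in $\Rbar$ is already a unit in $R_m$). Likewise the $t$ with $u^t\in R_n$ are the positive multiples of $s$. Combining this with the first claim, $u^t\in R_{mn}=R_m\cap R_n$ iff $u^t\in R_m$ and $u^t\in R_n$ iff $r\mid t$ and $s\mid t$ iff $\lcm(r,s)\mid t$; the least such positive $t$ is $\lcm(r,s)$, so $u^{\lcm(r,s)}$ is the minimal power of $u$ lying in $R_{mn}$.

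No step here is genuinely hard; the only points needing a little care are getting the basis description $R_k=\bZ\oplus\bZ(k\alpha)$ and its membership test right in the $d\equiv 1\modulo 4$ case (where $\alpha\neq\sqrt d$), and justifying the ``conversely'' step in the last paragraph via $u^r\in U(R_m)$, so that all the powers $u^{kr}$ really do lie in $R_m$ and the set of admissible exponents is the full set of multiples of $r$ rather than merely contained in it.
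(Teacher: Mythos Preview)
Your proposal is correct and follows essentially the same route as the paper: both arguments use the membership test $a+b\alpha\in R_k\iff k\mid b$ to get $R_m\cap R_n=R_{mn}$, and both combine the preceding lemma (so $r\mid k$, $s\mid k$) with the observation that $u^{\lcm(r,s)}\in R_m\cap R_n$ to pin down the minimal exponent as $\lcm(r,s)$. Your extra care in justifying $u^{kr}\in R_m$ via $u^r\in U(R_m)$ is fine but slightly more than needed, since for positive multiples of $r$ mere ring closure of $R_m$ already suffices.
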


\begin{proof}
    Let $\Rbar=\bZ[\alpha]$ be the ring of integers in $K$, with $\alpha=\frac{1+\sqrt{d}}{2}$ if $d\equiv 1\modulo{4}$ and $\alpha=\sqrt{d}$ otherwise. Then $R_m=\bZ[m\alpha]=\{a+b\alpha|a,b\in\bZ, m|b\}$ and $R_n=\bZ[n\alpha]=\{a+b\alpha|a,b\in\bZ,n|b\}$. Then note that an element $a+b\alpha\in\Rbar$ lies in $R_m\cap R_n$ if and only if $m|b$ and $n|b$. Since $m$ and $n$ are relatively prime, then $\lcm(m,n)=\frac{mn}{\gcd(m,n)}=mn$, so $m$ and $n$ both divide $b$ if and only if $mn$ divides $b$. Thus, $R_{mn}=\bZ[mn\alpha]=\{a+b\alpha|a,b\in\bZ,mn|b\}=R_m\cap R_n$.

    Now let $u^r$ and $u^s$ be the minimal powers of the fundamental unit $u$ which lie in $R_m$ and $R_n$, respectively, and let $u^k$ be the minimal power lying in $R_{mn}$. Since $u^k\in R_{mn}=R_m\cap R_n$, the previous lemma tells us that $r|k$ and $s|k$. Then $k$ is a common multiple of $r$ and $s$, so $k\geq \lcm(r,s)$. On the other hand, note that since $u^r\in R_m$ and $\lcm(r,s)$ is a multiple of $r$, then $u^{\lcm(r,s)}\in R_m$. Similarly, $u^{\lcm(r,s)}\in R_n$, so $u^{\lcm(r,s)}\in R_m\cap R_n=R_{mn}$. Then since $k$ is the minimal power of $u$ lying in $R_{mn}$, $k\leq \lcm(r,s)$. Since we have shown both inequalities, we conclude that $k=\lcm(r,s)$.
\end{proof}

\begin{theorem}
    \label{la prime powers}
    Let $d$ be a positive squarefree integer and $K=\bQ[\sqrt{d}]$. If $m,n\in\bN$ are coprime integers, then $R_{mn}$ is locally associated if and only if $R_m$ and $R_n$ are locally associated and $L(m,d)$ and $L(n,d)$ are coprime.
\end{theorem}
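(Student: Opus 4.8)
The plan is to translate the statement into an assertion about the minimal powers of the fundamental unit $u$ lying in the orders $R_m$, $R_n$, and $R_{mn}$, and then to resolve it by elementary divisibility bookkeeping. Write $u^r$, $u^s$, and $u^t$ for the minimal powers of $u$ lying in $R_m$, $R_n$, and $R_{mn}$ respectively. Theorem~\ref{locally associated iff} gives $r\mid L(m,d)$ and $s\mid L(n,d)$, and---using that $m$ and $n$ are coprime so that clause~(4) of Definition~\ref{L function} applies---also $t\mid L(mn,d)=L(m,d)L(n,d)$; moreover it reduces the statement ``$R_k$ is locally associated'' to the equality of the minimal-power exponent with $L(k,d)$. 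The lemma immediately preceding this theorem supplies the key identity $t=\lcm(r,s)$. Thus the whole theorem reduces to comparing $\lcm(r,s)$ with $L(m,d)L(n,d)$ under the constraints $r\mid L(m,d)$ and $s\mid L(n,d)$.

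For the ``if'' direction, suppose $R_m$ and $R_n$ are locally associated and $\gcd(L(m,d),L(n,d))=1$. Then $r=L(m,d)$ and $s=L(n,d)$ are coprime, so $t=\lcm(r,s)=rs=L(m,d)L(n,d)=L(mn,d)$, and Theorem~\ref{locally associated iff} gives that $R_{mn}$ is locally associated.

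For the ``only if'' direction, suppose $R_{mn}$ is locally associated, so $t=L(m,d)L(n,d)$. From $r\mid L(m,d)$ and $s\mid L(n,d)$ we obtain the chain $t=\lcm(r,s)\mid \lcm(L(m,d),L(n,d))\mid L(m,d)L(n,d)=t$, which forces every term in the chain to be equal. The equality $\lcm(L(m,d),L(n,d))=L(m,d)L(n,d)$ yields $\gcd(L(m,d),L(n,d))=1$, and then $r$ and $s$ are coprime as well, so $rs=\lcm(r,s)=L(m,d)L(n,d)$. Since $r\le L(m,d)$ and $s\le L(n,d)$, this product identity forces $r=L(m,d)$ and $s=L(n,d)$; hence $R_m$ and $R_n$ are both locally associated, and $\gcd(L(m,d),L(n,d))=1$ as shown.

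I do not anticipate a genuine obstacle: every step is routine once the two cited black boxes---the characterization in Theorem~\ref{locally associated iff} and the intersection lemma giving $t=\lcm(r,s)$---are in hand. The only points demanding a moment's care are verifying that coprimality of $m$ and $n$ is exactly what licenses the multiplicativity $L(mn,d)=L(m,d)L(n,d)$, and checking that the bounds $r\le L(m,d)$, $s\le L(n,d)$ together with $rs=L(m,d)L(n,d)$ really do pin down $r$ and $s$ individually.
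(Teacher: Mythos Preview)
Your proof is correct and uses the same two black boxes as the paper---Theorem~\ref{locally associated iff} and the intersection lemma yielding $t=\lcm(r,s)$---with the forward direction identical to the paper's. The one genuine difference is in the converse: the paper first invokes Proposition~\ref{la in towers} to conclude immediately that $R_m$ and $R_n$ are locally associated (since both contain $R_{mn}$), and only then computes $\gcd(L(m,d),L(n,d))=1$; you instead run a pure divisibility-chain argument $t=\lcm(r,s)\mid\lcm(L(m,d),L(n,d))\mid L(m,d)L(n,d)=t$, extract coprimality of $L(m,d)$ and $L(n,d)$ from the forced equality, and then pin down $r$ and $s$ from $rs=L(m,d)L(n,d)$ together with $r\le L(m,d)$, $s\le L(n,d)$. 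Your route is slightly more self-contained in that it never calls on Proposition~\ref{la in towers}, while the paper's route is a touch quicker because that proposition hands over $r=L(m,d)$ and $s=L(n,d)$ for free. Either way the substance is the same.
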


\begin{proof}
    Assume that $R_m$ and $R_n$ are locally associated orders and that $L(m,d)$ and $L(n,d)$ are coprime. By Theorem \ref{locally associated iff}, this means that $L(m,d)$ and $L(n,d)$ are the minimal powers of the fundamental unit $u$ lying in $R_m$ and $R_n$, respectively. By Lemma \ref{lcm power}, the minimal power of $u$ lying in $R_{mn}$ is therefore $\lcm(L(m,d),L(n,d))=\frac{L(m,d)\cdot L(n,d)}{\gcd(L(m,d),L(n,d))}=L(m,d)\cdot L(n,d)=L(mn,d)$. Then by Theorem \ref{locally associated iff}, $R_{mn}$ is locally associated.

    Now assume that $R_{mn}$ is locally associated. Since $R_m$ and $R_n$ are orders containing $R_{mn}$, Corollary \ref{la in towers} tells us that $R_m$ and $R_n$ must both be locally associated. By Theorem \ref{locally associated iff}, $L(m,d)$ and $L(n,d)$ are the minimal powers of $u$ lying in $R_m$ and $R_n$, respectively. Again using Lemma \ref{lcm power}, the minimal power of $u$ lying in $R_{mn}$ must be $\lcm(L(m,d),L(n,d))=\frac{L(m,d)\cdot L(n,d)}{\gcd(L(m,d), L(n,d))}$. Since $R_{mn}$ is locally associated, this minimal power must also be $L(mn,d)=L(m,d)\cdot L(n,d)$, so necessarily $\gcd(L(m,d),L(n,d))=1$; that is, $L(m,d)$ and $L(n,d)$ are coprime.
\end{proof}

This theorem will be incredibly useful in determining which orders are locally associated. Given any integer $n>1$, we can first factor $n$ into primes, $n=p_1^{a_1}\dots p_k^{a_k}$. Then, we can check whether the orders of prime-power index $R_{p_i^{a_i}}$ are locally associated. From there, to determine whether $R_n$ is locally associated, we need only check whether the values of $L(p_i^{a_i},d)$ are coprime. Since values of $L(n,d)$ are easy to calculate (and checking whether they are relatively prime is often even easier), we will only need to determine when $R_n$ is locally associated for prime-power $n$.

We conclude this section with modular arithmetic arguments which we will later use to prove the main results of this paper. To start, we present a pair of lemmata regarding binomial coefficients.

\begin{lemma}
    \label{binomial q-1}
    Let $q\in\bN$ be prime. For every $0\leq k\leq q-1$, $\binom{q-1}{k}\equiv (-1)^k\modulo{q}$.
\end{lemma}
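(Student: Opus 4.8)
The plan is to expand the binomial coefficient $\binom{q-1}{k}$ as the product $\frac{(q-1)(q-2)\cdots(q-k)}{k!}$ and reduce each factor of the numerator modulo $q$. For $1 \le j \le k \le q-1$, we have $q - j \equiv -j \pmod q$, so the numerator is congruent to $(-1)^k \cdot k!$ modulo $q$. Since $1 \le k \le q-1 < q$, the integer $k!$ is a product of factors each coprime to the prime $q$, hence $k!$ is invertible modulo $q$; dividing (i.e.\ multiplying by the inverse of $k!$) gives $\binom{q-1}{k} \equiv (-1)^k \pmod q$. The case $k = 0$ is immediate since $\binom{q-1}{0} = 1 = (-1)^0$.

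A cleaner way to package this, which I would likely use to avoid fussing over the invertibility of $k!$ in a quotient, is to argue by induction on $k$ using Pascal's rule. The base case $k=0$ gives $\binom{q-1}{0} = 1 \equiv (-1)^0$. For the inductive step, recall the identity $\binom{q}{k} = \binom{q-1}{k} + \binom{q-1}{k-1}$, and note that for $1 \le k \le q-1$ the prime $q$ divides $\binom{q}{k}$ (since $\binom{q}{k} = \frac{q}{k}\binom{q-1}{k-1}$ and $q \nmid k$). Hence $\binom{q-1}{k} \equiv -\binom{q-1}{k-1} \pmod q$, and applying the inductive hypothesis $\binom{q-1}{k-1} \equiv (-1)^{k-1} \pmod q$ yields $\binom{q-1}{k} \equiv (-1)^k \pmod q$, completing the induction.

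There is no real obstacle here; the only point requiring a moment's care is the divisibility $q \mid \binom{q}{k}$ for $0 < k < q$, which is the standard fact underlying the Frobenius/freshman's-dream congruence and follows directly from the factorization $k\binom{q}{k} = q\binom{q-1}{k-1}$ together with $\gcd(q,k) = 1$ (as $q$ is prime and $k < q$). I would state that divisibility as a one-line observation and then run whichever of the two arguments above is shorter in context — the direct numerator-reduction argument is probably the most transparent, so I lean toward presenting that one and relegating the inductive version to a remark if desired.
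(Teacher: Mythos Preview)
Your proposal is correct. Both of your arguments are valid and standard; the paper's own proof is also by induction on $k$, but via a slightly different recurrence than your Pascal-rule approach: it uses the multiplicative identity $\binom{q-1}{k+1}=\frac{q-(k+1)}{k+1}\binom{q-1}{k}$, clears the denominator, reduces modulo $q$ to get $(k+1)\binom{q-1}{k+1}\equiv -(k+1)\binom{q-1}{k}$, and then cancels the invertible factor $k+1$. Your first (direct product) argument is essentially the closed form of the paper's recurrence applied all at once, while your second argument trades the multiplicative ratio for the additive Pascal identity together with $q\mid\binom{q}{k}$. All three routes are equally elementary; your direct version is arguably the cleanest to read, and your Pascal version has the minor advantage of never needing to justify cancelling a factor in $(\bZ/q\bZ)^\times$.
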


\begin{proof}
    First, note that $\binom{q-1}{0}=1$ for any $q$, so the result holds for $k=0$. Now assume toward induction that for some $0\leq k<q-1$, $\binom{q-1}{k}\equiv (-1)^k\modulo{q}$. By definition of the binomial coefficient,
    $$\binom{q-1}{k+1}=\frac{(q-1)!}{(k+1)!\cdot (q-1-k-1)!}=\frac{q-1-k}{k+1}\cdot \frac{(q-1)!}{k!\cdot (q-1-k)!}=\frac{q-(k+1)}{k+1}\cdot \binom{q-1}{k}.$$
    Canceling the denominator and reducing modulo $q$ gives us $$(k+1)\binom{q-1}{k+1}\equiv -(k+1)\binom{q-1}{k}\equiv (k+1)(-1)^{k+1}\modulo{q}.$$
    Finally, since $1\leq k+1\leq q-1$, this element is invertible modulo $q$ and can thus be canceled from both sides of the congruence. Thus, $\binom{q-1}{k+1}\equiv (-1)^{k+1}\modulo{q}$, completing the inductive argument.
\end{proof}

\begin{lemma}
    \label{binomial q+1}
    Let $q\in \bN$ be prime. Then:
    $$\binom{q+1}{k}\equiv \begin{cases}1,&k\in\{0,1,q,q+1\};\\0,&\ow.\end{cases}\modulo{q}$$
\end{lemma}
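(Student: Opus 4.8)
The plan is to peel off one layer with Pascal's rule, passing from row $q+1$ to row $q$, and then reduce everything to the standard behaviour of $\binom{q}{j}$ modulo $q$. Concretely, I would first record the auxiliary fact that $\binom{q}{0}=\binom{q}{q}=1$ while $\binom{q}{j}\equiv 0\modulo{q}$ for every $1\le j\le q-1$. The middle-term vanishing follows from Lemma \ref{binomial q-1} together with Pascal's rule: $\binom{q}{j}=\binom{q-1}{j}+\binom{q-1}{j-1}\equiv(-1)^j+(-1)^{j-1}=0\modulo{q}$, where both binomial coefficients on the right are covered by the hypothesis $0\le j-1,\,j\le q-1$.

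With this in hand, I would apply Pascal's rule once more, $\binom{q+1}{k}=\binom{q}{k}+\binom{q}{k-1}$, and split into cases according to where $k$ and $k-1$ sit relative to the range $[1,q-1]$. For $k=0$ and $k=q+1$ the coefficient is literally $1$. For $k=1$ we get $\binom{q}{1}+\binom{q}{0}=q+1\equiv 1\modulo{q}$, and symmetrically for $k=q$ we get $\binom{q}{q}+\binom{q}{q-1}=1+q\equiv 1\modulo{q}$. Finally, for $2\le k\le q-1$ both $k-1$ and $k$ lie in $[1,q-1]$, so $\binom{q+1}{k}\equiv 0+0=0\modulo{q}$. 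Collecting the cases gives exactly the claimed formula.

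There is no genuine obstacle here; the only thing to watch is the bookkeeping at the four endpoint values $k\in\{0,1,q,q+1\}$, making sure each is handled by the correct case and not double-counted, and that the degenerate small primes (e.g. $q=2$, where the ``middle'' range $2\le k\le q-1$ is empty) are consistent with the statement. One could alternatively cite the congruence $\binom{q}{j}\equiv 0\modulo{q}$ directly, but deriving it from Lemma \ref{binomial q-1} keeps this section self-contained.
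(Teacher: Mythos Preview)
Your argument is correct. It differs from the paper's proof in that you work recursively via Pascal's rule, first deriving $\binom{q}{j}\equiv 0\modulo{q}$ for $1\le j\le q-1$ from Lemma~\ref{binomial q-1} and then stepping up one more row, whereas the paper handles the middle range $2\le k\le q-1$ in a single stroke by writing
\[
\binom{q+1}{k}=\frac{(q+1)\cdot q\cdot (q-1)!}{k!\,(q+1-k)!}
\]
and observing that the numerator carries an explicit factor of $q$ while neither factorial in the denominator does (both $k$ and $q+1-k$ are at most $q-1$). The paper's route is slightly shorter and does not appeal to Lemma~\ref{binomial q-1} at all; yours has the virtue of tying the two binomial lemmata together and avoiding any divisibility-in-a-fraction reasoning. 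Either way the endpoint cases $k\in\{0,1,q,q+1\}$ are handled identically, and your remark about the empty middle range when $q=2$ is a sensible sanity check.
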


\begin{proof}
    Recall that for any $q$, $\binom{q+1}{0}=\binom{q+1}{q+1}=1$ and $\binom{q+1}{1}=\binom{q+1}{q}=q+1$. Then if $k\in\{0,1,q,q+1\}$, $\binom{q+1}{k}\equiv 1\modulo{q}$. Otherwise,
    $$\binom{q+1}{k}=\frac{(q+1)\cdot q\cdot (q-1)!}{k!\cdot (q+1-k)!}$$
    Since $q$ is prime and $2\leq k\leq q-1$, note that the numerator in this expression has a factor of $q$ while the denominator does not. Then $\binom{q+1}{k}\equiv 0\modulo{q}$.
\end{proof}

\begin{proposition}
    \label{power l over 2}
    Let $p$ be a prime and $q\neq p$ an odd prime, and let $\alpha=a+b\sqrt{p}$, with $a,b\in\bZ$ relatively prime to $q$. Then the coefficient of $\sqrt{p}$ in the expansion of $\alpha^{\frac{L(q,p)}{2}}$ is divisible by $q$ if and only if $\kron{N(\alpha)}{q}=1$, where $N(\alpha)=a^2-b^2p$, the norm of $\alpha$ in $\bQ[\sqrt{p}]$.
\end{proposition}

\begin{proof}
    Let $x+y\sqrt{p}=(a+b\sqrt{p})^\frac{L(q,p)}{2}$. We can retrieve the coefficient $y$ of $\sqrt{p}$ by first subtracting the conjugate and then dividing by $2\sqrt{p}$: $$y=\frac{(a+b\sqrt{p})^{\frac{L(q,p)}{2}}-(a-b\sqrt{p})^{\frac{L(q,p)}{2}}}{2\sqrt{p}}.$$ We will show that this is divisible by $q$ if and only if $\kron{N(\alpha)}{q} = 1$. To do so, we first multiply both sides by the denominator and square to get 
    \begin{align*}
    4py^2&=(a+b\sqrt{p})^{L(q,p)}+(a-b\sqrt{p})^{L(q,p)} - 2(a+b\sqrt{p})^\frac{L(q,p)}{2}(a-b\sqrt{p})^\frac{L(q,p)}{2}\\
    &=\sum_{i=0}^{L(q,p)}{L(q,p) \choose 2i}a^{L(q,p)-i}(b\sqrt{p})^i + \sum_{i=0}^{L(q,p)}{L(q,p) \choose 2i}a^{L(q,p)-i}(-b\sqrt{p})^i - 2(N(\alpha))^\frac{L(q,p)}{2}
    \end{align*}
    Now in the above summations, note that for odd $i$, the terms in the two summations will cancel; for even $i$, the terms will be identical. Then canceling and combining like terms gives us
    $$4py^2=2\sum_{i=0}^{\frac{L(q,p)}{2}}{L(q,p) \choose 2i}a^{L(q,p)-2i}(b\sqrt{p})^{2i} - 2(N(\alpha))^\frac{L(q,p)}{2}.$$
    Now dividing by 2 and simplifying,
    $$2py^2=\sum_{i=0}^{\frac{L(q,p)}{2}}{L(q,p) \choose 2i}a^{L(q,p)-2i}b^{2i}p^i - N(\alpha)^\frac{L(q,p)}{2}$$
    From here, we split into two cases: when $\kron{p}{q}=1$ (so $L(q,p) = q-1$); and when $\kron{p}{q}=-1$ (so $L(q,p)=q+1$). Since $p\neq q$, we do not need to consider a case when $\kron{p}{q}=0$.
    
    Assume that $\kron{p}{q}=1$. The above identity then becomes
    $$2py^2=\sum_{i=0}^{\frac{q-1}{2}}{q-1 \choose 2i}a^{q-1-2i}b^{2i}p^i - N(\alpha)^\frac{q-1}{2}.$$
    We can now use Lemma \ref{binomial q-1} to note that $\binom{q-1}{2i}\equiv 1\modulo{q}$ for each $i$. Moreover, we can use the fact that $a$ is relatively prime to $q$ and Fermat's Little Theorem to note that $a^{q-1-2i}\equiv a^{-2i}\modulo{2i}$ (here, we use the shorthand $a^{-k}$ to denote raising the inverse of $a$ modulo $q$ to the $k^{th}$ power). This gives
    $$2py^2\equiv\sum_{i=0}^{\frac{q-1}{2}}a^{-2i}b^{2i}p^i - N(\alpha)^\frac{q-1}{2}\equiv\sum_{i=0}^{\frac{q-1}{2}}(a^{-2}b^2p)^i-N(\alpha)^{\frac{q-1}{2}}\modulo{q}.$$
    
    Now if $N(\alpha)=a^2-b^2p\equiv 0\modulo{q}$, note that $a^{-2}b^2p\equiv 1\modulo{q}$. In this case, the above congruence yields $2py^2\equiv \frac{q-1}{2}\not\equiv 0\modulo{q}$. Then when $\kron{N(\alpha)}{q}=0$, $y\not\equiv 0$, as desired. We continue under the assumption that $q\nmid N(\alpha)$ and make use of the formula for geometric sums.
    $$2py^2\equiv\frac{1-(a^{-2}b^2p)^\frac{q+1}{2}}{1-a^{-2}b^2p} - N(\alpha)^\frac{q-1}{2}\modulo{q}$$
    We now multiply both sides by $N(\alpha)\equiv a^2(1-a^{-2}b^2p)\modulo{q}$ to give:
    \begin{align*}
    2py^2N(\alpha)&\equiv a^2-a^2(a^{-2}b^2p)^\frac{q+1}{2}-N(\alpha)^\frac{q+1}{2}\\
    &\equiv a^2-a^{-(q-1)}b^{q+1}p^\frac{q+1}{2}-N(\alpha)^\frac{q+1}{2}\\
    &\equiv a^2-b^2p-N(\alpha)^\frac{q+1}{2}\\
    &\equiv N(\alpha)(1-N(\alpha)^{\frac{q-1}{2}})\modulo{q}
    \end{align*}
    
    Now since $q\nmid N(\alpha)$, we cancel this term to get $2py^2\equiv 1-N(\alpha)^{\frac{q-1}{2}}\modulo{q}$. Note that since $q\neq p$ is an odd prime, the left-hand side of this congruence is 0 modulo $q$ if and only if $q|y$. The right-hand side of this congruence is 0 modulo $q$ if and only if $\kron{N(\alpha)}{q}\equiv N(\alpha)^{\frac{q-1}{2}}\equiv 1\modulo{q}$. Thus, when $\kron{p}{q}=1$, $q|y$ if and only if $\kron{N(\alpha)}{q}=1$, as desired.
    
    Now assume that $\kron{p}{q}=-1$. Note in this case that if $q|N(\alpha)$, then $a^2\equiv b^2p\modulo{q}$. Since $b$ is invertible modulo $q$, this means that $p\equiv (ab^{-1})^2\modulo{q}$, and thus $p$ is a quadratic reside modulo $q$. This contradicts the fact that $\kron{p}{q}=-1$, so we conclude that $q\nmid N(\alpha)$.

    Our identity now becomes
    $$2py^2=\sum_{i=0}^{\frac{q+1}{2}}{q+1 \choose 2i}a^{q+1-2i}b^{2i}p^i - N(\alpha)^\frac{q+1}{2}.$$
    By Lemma \ref{binomial q+1}, $\binom{q+1}{2i}\equiv 0\modulo{q}$ for each $1\leq i\leq \frac{q-1}{2}$. Also note that $p^{\frac{q-1}{2}}\equiv \kron{p}{q}\equiv -1\modulo{q}$. Then
    $$2py^2\equiv a^{q+1}+b^{q+1}p^\frac{q+1}{2}- N(\alpha)^\frac{q+1}{2}\equiv a^2-b^2p- N(\alpha)^\frac{q+1}{2}\equiv N(\alpha)(1-N(\alpha)^{\frac{q-1}{2}}) \modulo{q}.$$
    
    As in the previous case, the left-hand side of this congruence is 0 modulo $q$ if and only if $q|y$. Since $q\nmid N(\alpha)$, the right-hand side of this congruence is 0 modulo $q$ if and only if $\kron{N(\alpha)}{q}\equiv N(\alpha)^{\frac{q-1}{2}}\equiv 1\modulo{q}$. Thus, when $\kron{p}{q}=-1$, $q|y$ if and only if $\kron{N(\alpha)}{q}=1$, as desired.
\end{proof}

\begin{proposition}
    \label{p power}
    Let $p$ and $q$ be primes, and let $\alpha=a+b\sqrt{p}$, with $a,b\in\bZ$, $q\nmid a$, and $q^k$ exactly dividing $b$ for some $k\in\bN$ (that is, $q^k|b$ but $q^{k+1}\nmid b$). Then the coefficient of $\sqrt{p}$ in the expansion of $\alpha^q$ is exactly divisible by $q^{k+1}$.
\end{proposition}

\begin{proof}
We will show that the coefficient of $\sqrt{p}$ in the expansion of $\alpha^q$ is divisible by $q^{k+1}$ but not by $q^{k+2}$. Expanding $\alpha^q$ yields
$$\alpha^q=(a+b\sqrt{p})^q=\sum_{i=0}^q\binom{q}{i}a^{q-i}(b\sqrt{p})^i.$$
Since we are only interested in the coefficient of $\sqrt{p}$ in this expansion, we only need to consider the odd terms in this sum. That is, letting $\alpha^q=x+y\sqrt{p}$,
$$y=\sum_{i=0}^{\frac{q-1}{2}} \binom{q}{2i+1}a^{q-2i-1}b^{2i+1}p^i.$$ 

Now note that for any $j\geq 3$, we have $jk\geq 3k\geq k+2$. Since $q^k$ divides $b$, then $q^{k+2}$ divides $b^j$ for any $j\geq 3$. Then modulo $q^{k+2}$, every term in the above sum except the first will vanish; that is,
$$y\equiv qa^{q-1}b\modulo{q^{k+2}}.$$
Since $q^k$ exactly divides $b$ and $q\nmid a$, this term is divisible by $q^{k+1}$ but not by $q^{k+2}$, as desired.

\end{proof}

\section{Determining Locally Associated Orders}
We now turn our attention to actually determining when orders are (or are not) locally associated. Recall from Theorem \ref{locally associated iff} that to determine when the index $n$ order $R_n$ in the quadratic number field $K=\bQ[\sqrt{d}]$ is locally associated, we need to determine the function value $L(n,d)$ and the minimal power $m$ of the fundamental unit $u$ in $\Rbar$ such that $u^m\in R_n$. For the purposes of this paper, we will focus primarily on the case when $d$ is prime. Also recall from our discussion in the previous section that we can limit our attention to orders whose index $n$ is a power of a prime. Finally, recall that when searching for the minimal power $u^m\in R_n$, Theorem \ref{locally associated iff} tells us that we only need to check values of $m$ which divide $L(n,d)$.

 \begin{theorem}
    \label{main result}
     Let $R_n$ be the index $n=q^k$ order in the (real) quadratic number field $K=\bQ[\sqrt{p}]$, where $p,q\in\bN$ are prime and $k\in\bN$.

     If any of the following conditions hold, $R_n$ is locally associated.
     \begin{enumerate}
         \item $n=2$ and $p\not\equiv 5\modulo{8}$.
         \item $n=4$ and $p\equiv 1\modulo{8}$.
         \item $n=3$ and $p\not\equiv 3\modulo{4}$.
     \end{enumerate}
     Moreover,
     \begin{enumerate}
         \item[(4)] $R_{p^k}$ is locally associated for every $k\in\bN$ if and only if $R_p$ is locally associated.
         \item[(5)] When $p\equiv 5\modulo{8}$, $R_4$ is locally associated if and only if $R_2$ is locally associated.
         \item[(6)] If $q$ is odd and $p\neq q$, then $R_{q^k}$ is locally associated for every $k\in\bN$ if and only if $R_{q^2}$ is locally associated.
     \end{enumerate}
     If any of the following conditions hold, $R_n$ is NOT locally associated.
     \begin{enumerate}
         \item[(7)] $q$ is odd, $p\neq q$, and either $p\equiv 3\modulo{4}$ or $q\equiv 1\modulo{4}$.
         \item[(8)] $n=4$ and $p\equiv 3\modulo{4}$.
         \item[(9)] $n=8$ and $p$ is odd.
     \end{enumerate}
 \end{theorem}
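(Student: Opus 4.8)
The plan is to use Theorem~\ref{locally associated iff} as the common engine: writing $u$ for the fundamental unit of $\Rbar$ and, for an index $n$, letting $m(n)$ be the least positive integer with $u^{m(n)}\in R_n$, one has $m(n)\mid L(n,p)$, and $R_n$ is locally associated exactly when $m(n)=L(n,p)$. So in each of the nine parts we read $L(q^k,p)$ off Definition~\ref{L function} and then pin down $m(q^k)$. The key reduction we will use repeatedly is that membership in $R_{q^k}$ is a single divisibility: for $\beta\in\Rbar$, let $y(\beta)$ be the coefficient of $\sqrt p$ in $\beta$ if $p\not\equiv 1\modulo{4}$ and in $2\beta$ if $p\equiv 1\modulo{4}$; then $\beta\in R_{q^k}$ if and only if $q^k\mid y(\beta)$. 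Everything then comes down to the $q$-adic valuation of $y(u^j)$ for suitable $j$, and the two tools we will lean on are Theorem~\ref{power l over 2} (which computes this valuation ``halfway up'', in terms of $\kron{N(u)}{q}$) and Theorem~\ref{p power} (which shows a single ``$q$th-power'' step raises the valuation by exactly one). Note that when $p\equiv 1\modulo{4}$ and $q$ is odd, $v_q(y(u^j))$ equals the $q$-adic valuation of the $\sqrt p$-coefficient of $(2u)^j$, so both theorems can be applied to the integral element $2u$.

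For the small positive cases (1)--(3) we first compute $L(q^k,p)\in\{1,2,4\}$ and then show $m(q^k)$ cannot be any smaller. When $L=1$ there is nothing to do. In cases (1) and (2), $L\in\{1,2\}$, and when $L=2$ the parity statements of Lemmas~\ref{fundamental unit 3 mod 4} and~\ref{fundamental unit 1 mod 8} (plus a direct check when $p=2$) show $y(u)$ is not divisible by $n\in\{2,4\}$, so $u\notin R_n$ and $m(n)=2=L$. Case (3) ($n=3$, $p\not\equiv 3\modulo{4}$) is the real content: Propositions~\ref{unit norm 3 mod 4} and~\ref{unit norm prime 1 mod 4} give $N(u)=-1$, so $\kron{N(u)}{3}=\kron{-1}{3}=-1$, and Theorem~\ref{power l over 2} yields $3\nmid y(u^{L(3,p)/2})$; hence $m(3)\nmid L(3,p)/2$, which with $m(3)\mid L(3,p)$ forces $m(3)=L(3,p)$. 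First, though, we must clear the degenerate cases $3\mid a$ and $3\mid b$ in the integral representation $a+b\sqrt p$ of $u$ (or of $2u$), using $N(u)=-1$: the case $3\mid b$ cannot occur, and $3\mid a$ forces $\kron{p}{3}=1$, whence $u^2\in R_3$ outright and again $m(3)=2=L(3,p)$.

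For the small negative cases: in (8), $a$ is even by Lemma~\ref{fundamental unit 3 mod 4}, so $4\mid y(u^2)=2ab$ and $u^2\in R_4$, giving $m(4)\mid 2<4=L(4,p)$. In (9) we split on $p\bmod 8$: if $p\equiv 3\modulo{4}$ then $R_4$ fails by (8), so $R_8\subseteq R_4$ fails by Proposition~\ref{la in towers}; if $p\equiv 1\modulo{8}$ then $4\mid a$ gives $u^2\in R_8$ while $L(8,p)=4$; and if $p\equiv 5\modulo{8}$ we may assume $R_2$ is locally associated (else $R_8\subseteq R_2$ fails by Proposition~\ref{la in towers}), so $m(2)=3$ and $u=\tfrac{a+b\sqrt p}{2}$ with $a,b$ odd and $a^2-b^2p=-4$; reducing the relevant coefficient modulo $16$ then gives $u^6\in R_8$, so $m(8)\mid 6$, and since $m(8)$ is also a multiple of $m(2)=3$ we get $m(8)\in\{3,6\}$, which is never $12=L(8,p)$. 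For (7), the hypothesis ``$p\equiv 3\modulo{4}$ or $q\equiv 1\modulo{4}$'' is precisely what makes $\kron{N(u)}{q}=1$ (if $p\equiv 3\modulo{4}$ then $N(u)=1$ by Proposition~\ref{unit norm 3 mod 4}; if $q\equiv 1\modulo{4}$ then $\kron{-1}{q}=1$ while $N(u)=\pm1$). Theorem~\ref{power l over 2} then gives $q\mid y(u^{L(q,p)/2})$, so $u^{L(q,p)/2}\in R_q$ and $m(q)\le L(q,p)/2<L(q,p)$; thus $R_q$ is not locally associated, and neither is $R_{q^k}$ by Proposition~\ref{la in towers}. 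The degenerate cases of Theorem~\ref{power l over 2} are handled by hand: $q\mid b$ gives $u\in R_q$ (so $m(q)=1$) and $q\mid a$ gives $u^2\in R_q$ (so $m(q)\le 2$), each below $L(q,p)=q\mp1\ge 4$, except possibly when $q=3$; but when $q=3$ the hypothesis forces $p\equiv 3\modulo{4}$, hence $N(u)=1$, which makes $L(3,p)=4$ whenever $3\mid a$. This degenerate bookkeeping, the pervasive factor of $2$ when $p\equiv 1\modulo{4}$, and the two ad hoc $2$-adic computations in (5) and (9) will be the fussiest part; everything else is a direct application of the two tools above, so I expect (7)'s degenerate cases to be the main obstacle.

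Finally, (4), (5), (6) are bootstrap statements, whose forward implications follow at once from Proposition~\ref{la in towers} (each of $R_p$, $R_2$, $R_{q^2}$ is intermediate to the larger orders in question; in (6), the case $k=1$ is likewise Proposition~\ref{la in towers}). For the reverse implications in (4) and (6) we induct using Theorem~\ref{p power}. If $R_p$ is locally associated, then $m(p)=L(p,p)=p$, so $p\nmid y(u)$, and Theorem~\ref{p power} gives $v_p(y(u^{p^j}))=j$ exactly for all $j\ge 0$; hence $u^{p^k}\in R_{p^k}$ while $u^{p^{k-1}}\notin R_{p^k}$, and as $m(p^k)\mid L(p^k,p)=p^k$ this pins $m(p^k)=p^k$. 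The same argument handles (6) with $u$ replaced by $v:=u^{L(q,p)}$: if $R_{q^2}$ is locally associated then $v\in R_q\setminus R_{q^2}$, i.e.\ $v_q(y(v))=1$, so $v_q(y(v^{q^j}))=j+1$ exactly; then $m(q^k)$ is a multiple of $q^{k-1}$ dividing $L(q^k,p)=q^{k-1}L(q,p)$, and since $\gcd(L(q,p),q)=1$ and $m(q)=L(q,p)$ divides $m(q^k)$, we conclude $m(q^k)=q^{k-1}L(q,p)=L(q^k,p)$. Case (5) mirrors the $p\equiv 5\modulo{8}$ subcase of (9): $R_2$ locally associated forces $u=\tfrac{a+b\sqrt p}{2}$ with $a,b$ odd, a congruence modulo $16$ (using $a^2-b^2p=-4$) shows $u^3\notin R_4$, so $m(4)\nmid 3$, and since $m(4)$ is a multiple of $m(2)=3$ dividing $L(4,p)=6$ we conclude $m(4)=6=L(4,p)$.
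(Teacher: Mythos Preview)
Your overall strategy and casework match the paper's essentially line for line, and cases (1)--(3), (5), (7)--(9) are handled correctly (modulo the easy check that $N(u)=-1$ when $p=2$, which neither Proposition~\ref{unit norm 3 mod 4} nor~\ref{unit norm prime 1 mod 4} covers). There is, however, a genuine gap in your treatment of case~(4), and a related omission in both~(4) and~(6).

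In case~(4) you write that ``Theorem~\ref{p power} gives $v_p(y(u^{p^j}))=j$ exactly for all $j\ge 0$.'' But Theorem~\ref{p power} is stated only for $k\in\bN$, i.e.\ $k\ge 1$; its proof uses $3k\ge k+2$, which fails for $k=0$. So the very first step of your induction, passing from $v_p(y(u))=0$ to $v_p(y(u^p))=1$, is not covered by the theorem as stated. (Indeed, for $k=0$ and $q\ne p$ the conclusion is simply false: the top term $b^qp^{(q-1)/2}$ in the binomial expansion is \emph{not} divisible by $q$.) What rescues the $q=p$ case is the extra factor of $p^i$ in each term of the expansion, and this is exactly what the paper supplies by proving the base case $R_{p^2}$ separately via a direct binomial computation (special-casing $p\in\{2,3\}$), before invoking Theorem~\ref{p power} only for $k\ge 2$. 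You need to insert that base-case argument.

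Secondly, in both~(4) and~(6) you never verify the hypothesis $q\nmid a$ of Theorem~\ref{p power}. For the inductive steps this is easy---once $q$ divides the $\sqrt p$-coefficient of the unit $\alpha$, divisibility of the rational part as well would force $q\mid\alpha$ in $\Rbar$, contradicting that $\alpha$ is a unit---but it should be said. For the base case of~(4) this argument is unavailable (since $p\nmid y(u)$), and the paper instead deduces $p\nmid a$ from $u^2\notin R_p$, which uses $m(p)=p>2$. Your write-up skips both of these checks.
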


 Throughout the proofs that follow, we will let $u\in U(\Rbar)$ denote the fundamental unit in $\Rbar$ and $m\in\bN$ denote the minimal power of $u$ such that $u^m\in R_n$.

    \begin{proof}[Proof of (1).]
        We consider three cases. If $p=2$, then we can verify using the table at \cite{quadla} (or by direct calculation) that $R_2$ is locally associated in $\bQ[\sqrt{2}]$. If $p\equiv 1\modulo{8}$, then $L(2,p)=1$, meaning that $R_2$ is trivially locally associated. Finally, if $p\equiv 3\modulo{4}$, then $L(2,p)=2$ and thus $m$ must be either 1 or 2. Since Lemma \ref{fundamental unit 3 mod 4} tells us that $u=a+b\sqrt{p}$ with $b$ odd, then $m\neq 1$ and thus $R_2$ is locally associated.
    \end{proof}
        \begin{proof}[Proof of (2).]
            Note that $L(4,p)=2$, so $m$ must either be 1 or 2. By Lemma \ref{fundamental unit 1 mod 8}, our fundamental unit is of the form $u=a+b\sqrt{d}$ with $b$ odd. Then $u\notin R_4=\bZ[2\sqrt{p}]$, so $m=2$. Then $R_4$ is locally associated.
        \end{proof}
        \begin{proof}[Proof of (3).]
            If $p=2$, then we can verify using the table at \cite{quadla} (or by direct calculation) that $R_3$ is locally associated in $\bQ[\sqrt{p}]$. Otherwise, $p\equiv 1\modulo{4}$, so we note by Proposition \ref{unit norm prime 1 mod 4} that $N(\alpha)=-1$. Then writing $u=\frac{c+d\sqrt{p}}{2}$, we get that $c^2-d^2p=-4$. If $3|d$, then $c^2\equiv 2\modulo{3}$, a contradiction; then $u\notin R_3$.

            Since $p\neq 3$, we have two cases to consider. If $\kron{p}{3}=1$ (i.e. if $p\equiv 1\modulo{3}$), then $L(3,p)=2$, and thus $m$ must be either 1 or 2. Since we have already shown that $u\notin R_3$, then $m=2$ and thus $R_3$ is locally associated. If $\kron{p}{3}=-1$ (i.e. if $p\equiv -1\modulo{3}$), then $L(3,p)=4$, and thus $m$ must be 1, 2, or 4. We have already established that $m\neq 1$. Now using the fact that $p\equiv -1\modulo{3}$, we note that $c^2-d^2p\equiv c^2+d^2\equiv 2\modulo{3}$, and thus $c^2\equiv d^2\equiv 1\modulo{3}$. Thus, neither $c$ nor $d$ is divisible by 3. We can now use Proposition \ref{power l over 2} to conclude that $(2u)^\frac{4}{2}=(2u)^2\notin R_3$, since $\kron{N(2u)}{3}=\kron{-4}{3}=\kron{2}{3}=-1$. Since $2\nmid n$, then $(2u)^2\notin R_3$ means that $u^2\notin R_3$ and thus $m\neq 2$. Then $m=4$, so $R_3$ is locally associated.
        \end{proof}
          \begin{proof}[Proof of (4).]
             The forward direction of this proof is trivial; if $R_{p^k}$ is locally associated for every $k\in\bN$, then $R_p$ is locally associated.
             
             For the reverse direction, we assume that $R_p$ is locally associated. Since $L(p,p)=p$, this simply means that $u\notin R_p$. To verify that $R_{p^2}$ is locally associated, we note that $L(p^2,p)=p^2$. Since $m$ must divide $p^2$, we need to show that $u^p\notin R_{p^2}$. For $p=2$ and $p=3$, we use the table at \cite{quadla} to verify that $R_4$ and $R_9$ are locally associated in $\bQ[\sqrt{2}]$ and $\bQ[\sqrt{3}]$, respectively. Then we will assume that $p>3$.

             Let $u=\frac{c+d\sqrt{p}}{2}$; since $p\neq 2$, we note that for any $n,k\in\bN$, $u^n\in R_{p^k}$ if and only if $(2u)^n=(c+d\sqrt{p})^n\in R_{p^k}$. Then since $u\notin R_p$, we know that $p\nmid d$. Moreover, since $u^2\notin R_p$, we know that $(2u)^2=(c^2+d^2p)+2cd\sqrt{p}\notin R_p$. Therefore, $p\nmid c$. Now write $(2u)^p=(c+d\sqrt{p})^p=x+y\sqrt{p}$. Expanding the binomial and collecting the coefficients of $\sqrt{p}$ from the odd terms, we get
             $$y=\sum_{i=0}^{\frac{p-1}{2}}\binom{p}{2i+1}a^{q-2i-1}b^{2i+1}p^i.$$
             For $i\geq 2$, note that the terms in the above sum have a factor of $p^2$. Since $p\neq 3$, $p|\binom{p}{3}$, and thus the $i=1$ term in the above sum is divisible by $p^2$ as well. Then
             $$y\equiv pa^{q-1}b\not\equiv 0\modulo{p^2}.$$
             Then $u^p\notin R_{p^2}$, and thus $R_{p^2}$ is locally associated.

             Now assume toward induction that for some $k\geq 2$, $R_{p^k}$ is locally associated; since $L(p^k,p)=p^k$, this means that $\alpha:=u^{p^{k-1}}\notin R_{p^k}$. We will show that $R_{p^{k+1}}$ is locally associated by showing that $u^{p^k}=\alpha^p\notin R_{p^{k+1}}$. Since $\alpha$ is an element of $R_{p^{k-1}}$ (as $L(p^{k-1},p)=p^{k-1}$) but not of $R_{p^k}$, we write $\alpha=\frac{c+d\sqrt{p}}{2}$, with $d$ exactly divisible by $p^{k-1}$. Since $\alpha$ is a unit in $\Rbar$, we know that $p\nmid \alpha$; thus, $p\nmid c$. Then $2\alpha=c+d\sqrt{p}$ has $p\nmid c$ and $d$ exactly divisible by $p^{k-1}$ with $k\geq 2$, so Proposition \ref{p power} tells us that the coefficient of $\sqrt{p}$ in the expansion of $(2\alpha)^p$ is exactly divisible by $p^k$. Then $\alpha^p=u^{p^k}$ lies in $R_{p^k}$ but not in $R_{p^{k+1}}$, and thus $R_{p^{k+1}}$ is locally associated. By induction, $R_{p^k}$ is locally associated for every $k\in\bN$.
         \end{proof}
         \begin{proof}[Proof of (5).]
             Again, the forward direction is trivial. Since $R_4\subseteq R_2$, then Proposition \ref{la in towers} tells us that if $R_4$ is locally associated, $R_2$ must be locally associated as well.
             
             For the reverse direction, assume that $R_2$ is locally associated. Since $L(2,p)=3$, this means that $u=\frac{c+d\sqrt{p}}{2}\notin R_2$. Then $d$ is odd, and since $c$ and $d$ must have the same parity (as $u\in \Rbar$), $c$ is odd as well. To show that $R_4$ is locally associated as well, note that $L(4,p)=6$. Since neither $u$ nor $u^2$ lies in $R_2$, these elements certainly do not lie in $R_4$. Thus, it will suffice to show that $m\neq 3$, i.e. $u^3\notin R_4$. Expanding $u^3$ gives
             $$u^3=\left(\frac{c+d\sqrt{p}}{2}\right)^3=\frac{c^3+3cd^2p}{8}+\frac{3c^2d+d^3p}{8}\sqrt{p}.$$
             We now consider the numerator of the coefficient of $\sqrt{p}$, $3c^2d+d^3p=d(3c^2+d^2p)$. By Proposition \ref{unit norm prime 1 mod 4}, $N(u)=-1$; thus, $N(2u)=c^2-d^2p=-4$, so $d^2p=c^2+4$. Substituting into the numerator yields $d(3c^2+d^2p)=d(4c^2+4)=4d(c^2+1)$. Now since $c$ is odd, we note that $c^2\equiv 1\modulo{4}$. Then $4d(c^2+1)\equiv 8\modulo{16}$, and thus the coefficient of $\sqrt{p}$ in the above expression, $\frac{3c^2d+d^3p}{8}$, is an odd integer. Then $u^3\notin R_4=\bZ[2\sqrt{p}]$, so $R_4$ is locally associated.
         \end{proof}
         \begin{proof}[Proof of (6).]
             The forward direction of this proof is again trivial; if $R_{q^k}$ is locally associated for every $k\in \bN$, then $R_{q^2}$ is of course locally associated.

             Now for the reverse direction, assume that $R_{q^2}$ is locally associated (and thus $R_q$ is locally associated as well). Then $u^{L(q,p)}\in R_q\backslash R_{q^2}$; that is, $u^{L(q,p)}=\frac{c+d\sqrt{p}}{2}$, with $q$ exactly dividing $d$. We proceed in much the same way as in the inductive step of (4). 
             
             Assume toward induction that for some $k\geq 2$, $R_{q^k}$ is locally associated. Since $L(q^k,p)=q\cdot L(q^{k-1},p)$, this means that $\alpha:=u^{L(q^{k-1},p)}\in R_{q^{k-1}}\backslash R_{q^k}$. We will show that $R_{q^{k+1}}$ is also locally associated by showing that $u^{L(q^k,p)}=\alpha^q\notin R_{q^{k+1}}$. We write $\alpha=\frac{c+d\sqrt{p}}{2}$ and note that $q^{k-1}$ exactly divides $d$. Moreover, since $\alpha$ is a unit, $q\nmid \alpha$ and thus $q\nmid c$. Then by Proposition \ref{p power}, the coefficient of $\sqrt{p}$ in the expansion of $(2\alpha)^q=(c+d\sqrt{p})^q$ is exactly divisible by $q^k$. Therefore, $(2\alpha)^q$ (and thus $\alpha^q=u^{L(q^k,p)}$, since $q\neq 2$) lies in $R_{q^k}$ but not $R_{q^{k+1}}$. Then $R_{q^{k+1}}$ is locally associated. By induction, $R_{q^k}$ is locally associated for every $k\in\bN$.
         \end{proof}
         \begin{proof}[Proof of (7).]
             We will show that $R_q$ is not locally associated; this will also show that $R_n$ is not locally associated for every $n=q^k$, $k\in\bN$ by Theorem \ref{la in towers}. First, assume that $p\equiv 3\modulo{4}$. Then $u=a+b\sqrt{p}$ with $a,b\in\bZ$. Since $L(q,p)=q-\kron{p}{q}>1$, then if $q|b$ (i.e. $u\in R_q$), $R_q$ automatically fails to be locally associated. By inspection, $L(q,p)=2$ if and only if $q=3$ and $p\equiv 1\modulo{3}$. In this case, note that if $q|a$, then $1=N(u)=a^2-b^2p\equiv-b^2\modulo{p}$, a contradiction. Otherwise, $L(q,p)>2$. Then if $q|a$, we observe that $u^2=(a^2+b^2p)+2ab\sqrt{p}\in R_q$ and thus $R_q$ is again not locally associated. Then we can assume that $q\nmid a$ and $q\nmid b$. Applying Proposition \ref{power l over 2} tells us that $u^{\frac{L(q,p)}{2}}\in R_q$ since $\kron{N(u)}{q}=\kron{1}{q}=1$. Then $R_q$ is not locally associated.

             Now assume that $q\equiv 1\modulo{4}$ and $p\not\equiv 3\modulo{4}$. We write $u=\frac{c+d\sqrt{p}}{2}$ with $c,d\in\bZ$, $c\equiv d\modulo{2}$. Since $L(q,p)=q-\kron{p}{q}>2$, we note by largely the same argument as above that if $q|c$ or $q|d$, then $R_q$ is not locally associated. Otherwise, $2u=c+d\sqrt{p}$ has both $c$ and $d$ relatively prime to $q$ and we can again apply Proposition \ref{power l over 2}. Since $q\equiv 1\modulo{4}$, we note that $\kron{N(2u)}{q}=\kron{-4}{q}=\kron{4}{q}\cdot\kron{-1}{q}=1$. Then $(2u)^{\frac{L(q,p)}{2}}\in R_q$, and since $q\neq 2$, $u^{\frac{L(q,p)}{2}}\in R_q$. Once again, we conclude that $R_q$ is not locally associated.
         \end{proof}
         \begin{proof}[Proof of (8).]
             We first note that $L(4,p) = 4$. Then by Lemma $\ref{fundamental unit 3 mod 4}$, the fundamental unit is of the form $u=a+b\sqrt{p}$, with $a$ even and $b$ odd. Then $u^2=(a^2+b^2p)+2ab\sqrt{p}\in R_4$, so $R_4$ is not locally associated.
         \end{proof}
         \begin{proof}[Proof of (9).]
            We consider three cases. If $p\equiv 3\modulo{4}$, then the previous case tells us that $R_4$ is not locally associated. Then by Theorem \ref{la in towers}, $R_8$ is certainly not locally associated. If $p\equiv 1\modulo{8}$, then $L(8,p)=4$. Lemma \ref{fundamental unit 1 mod 8} tells us that $u=a+b\sqrt{p}$, with $4|a$ and $b$ odd. Then $u^2=(a^2+b^2p)+2ab\sqrt{p}\in R_8=\bZ[4\sqrt{p}]$, so $R_8$ is not locally associated.
             
            The final case to consider is when $p\equiv 5\modulo{8}$. In this case, $L(8,p)=12$. If $R_2$ is not locally associated, then $R_8$ is certainly not locally associated; we can therefore assume that $R_2$ is locally associated. From the proof of case (5) above, we recall that $u^3=a+b\sqrt{p}$, with $b$ an odd integer. Since $N(u^3)=a^2-b^2p=-1$, then note that $a^2\equiv 1+b^2p\equiv 0\modulo{2}$. Then $a$ must be an even integer, so $u^6=(u^3)^2=(a+b^2p)+2ab\sqrt{p}\in R_8=\bZ[4\sqrt{2}]$. Thus, $R_8$ is not locally associated.
         \end{proof}

 In many cases, these results will allow us to determine when an order $R_n$ is a quadratic number field $K=\bQ[\sqrt{p}]$ is (or is not) locally associated with minimal work required. We conclude this section with some concrete examples.

 \begin{example}
     Let $K=\bQ[\sqrt{3}]$ and $R_n=\bZ[n\sqrt{3}]$ be the index $n$ order in $K$ for $n=13122=2\cdot 3^8$. Note that $L(2,3)=2$ and $L(3^8,3)=3^8$ are relatively prime; then by Theorem \ref{la prime powers}, $R_n$ is locally associated if and only if $R_2$ and $R_{3^8}$ are locally associated. Case (1) above guarantees that $R_2$ is locally associated, and case (4) above guarantees that $R_{3^8}$ is locally associated if and only if $R_3$ is locally associated. Using the table at \cite{quadla} (or by direct calculation), we can observe that $R_3$ is locally associated; thus, $R_n$ is locally associated. Note that this process is much simpler than directly verifying that the minimal power $m$ of the fundamental unit $u=2+\sqrt{3}$ which lies in $R_n$ is $u^{L(n,p)}=(2+\sqrt{3})^{13122}$
 \end{example}

 \begin{example}
     Let $K=\bQ[\sqrt{71}]$ and $R_n=\bZ[n\frac{1+\sqrt{71}}{2}]$ be the index $n$ order in $K$ for $n=1868059634=2\cdot 7^4\cdot 73^3$. Note that $L(2,73)=1$, $L(7^4,73)=7^3\cdot 8$, and $L(73^3,73)=73^3$ are relatively prime; then by Theorem \ref{la prime powers}, $R_n$ is locally associated if and only if $R_2$, $R_{7^4}$, and $R_{73^3}$ are locally associated. Cases (1) and (4) can be used to verify that $R_2$ and $R_{73^3}$ are locally associated as in the previous example. Case (6) guarantees that $R_{7^4}$ is locally associated is and only if $R_{7^2}=R_{49}$ is locally associated. However, the table at \cite{quadla} tells us that $R_{49}$ is not locally associated, and thus $R_n$ is not locally associated.
 \end{example}

 \begin{example}
     Let $K=\bQ[\sqrt{13}]$ and $R_n=\bZ[n\frac{1+\sqrt{13}}{2}]$ be the index $n$ order in $K$ for $n=1965641640625=5^7\cdot 13^2\cdot 53^3$. Note that $L(5^7,13)=5^6(5-\kron{13}{5})$ and $L(53^3,13)=53^2(53-\kron{13}{53})$ are both even and are thus not relatively prime. Then by Theorem \ref{la prime powers}, $R_n$ is not locally associated.
 \end{example}

\section{Undetermined Cases}
The major results from the previous section serve to eliminate most of the work in determining when an order is (or is not) locally associated. That said, there are some specific cases which are either difficult to prove in generality, exhibit unpredictable behavior, or defy simple characterization. In this section, we will explore these cases and outline yet open problems related to them. In addition, we will make frequent reference to the table at \cite{quadla} in order to provide data about the frequency of locally associated orders.

The first case we will consider is the index $p$ order $R_p$ in the quadratic field $K=\bQ[\sqrt{p}]$, where $p$ is a prime. Note in this case that $L(p,p)=p$, so the minimal power of the fundamental unit $u$ which lies in $R_p$ must be either $u$ or $u^p$. In other words, $R_p$ is locally associated if and only if the fundamental unit $u$ does not lie in $R_p$; that is, if and only if $u=a+b\alpha$, with $p\nmid b$.

To further explore this case, recall that when $p\equiv 1\modulo{4}$, $\Rbar=\{\frac{a+b\sqrt{p}}{2}|a,b\in \bZ,a\equiv b\modulo{2}\}$; otherwise, $\Rbar=\{a+b\sqrt{p}|a,b\in\bZ\}$. Since units in $\Rbar$ are characterized by having norm $\pm 1$ (and keeping in mind Propositions \ref{unit norm 3 mod 4} and \ref{unit norm prime 1 mod 4}), the fundamental unit in the first case is $u=\frac{a+b\sqrt{p}}{2}$, where $(a,b)$ is the minimal solution to the Diophantine equation $x^2-y^2p=-4$. In the second case, the fundamental unit is either $1+\sqrt{2}$ if $p=2$ or $a+b\sqrt{p}$, where $(a,b)$ is the minimal solution to the Diophantine equation (in particular, Pell's equation) $x^2-y^2p=1$. This leads us to the following result.

\begin{theorem}
    Let $p$ be an odd prime. Then the index $p$ order $R_p$ in the quadratic number field $K=\bQ[\sqrt{p}]$ is locally associated if and only if the Pell's equation $x^2-y^2p=1$ has an integer solution $(a,b)$ with $p\nmid b$.
\end{theorem}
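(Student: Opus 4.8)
The plan is to reduce the claim to a short exponent computation in the unit group. Let $m$ denote the minimal power of the fundamental unit $u$ lying in $R_p$. By the discussion immediately preceding the theorem --- equivalently, by Theorem~\ref{locally associated iff} with $L(p,p)=p$ --- we have $m\mid p$, so $m\in\{1,p\}$, and $R_p$ is locally associated exactly when $m=p$, that is, exactly when $u\notin R_p$. Hence it suffices to prove
$$u\notin R_p\ \Longleftrightarrow\ \text{the equation } x^{2}-y^{2}p=1 \text{ has an integer solution } (a,b) \text{ with } p\nmid b.$$

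First I would record a translation lemma: for $\xi=a+b\sqrt p\in\bZ[\sqrt p]$, writing $\xi$ in the $\bZ$-basis $\{1,\alpha\}$ of $\Rbar$ and using that $p$ is odd shows $\xi\in R_p$ if and only if $p\mid b$. Under the bijection $(a,b)\mapsto a+b\sqrt p$ between integer solutions of $x^{2}-y^{2}p=1$ and norm-$1$ units of $\bZ[\sqrt p]$, this says that a solution with $p\nmid b$ exists precisely when some norm-$1$ unit of $\bZ[\sqrt p]$ lies outside $R_p$. Now the norm-$1$ units of $\bZ[\sqrt p]$, together with $-1$, form a group $\{\pm1\}\times\langle w\rangle$ for a unique fundamental element $w>1$; since $R_p$ is a subring of $\bZ[\sqrt p]$ closed under conjugation and the inverse of a norm-$1$ unit is $\pm$ its conjugate, $w\in R_p$ would force $w^{k}\in R_p$ for all $k$. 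Thus such a solution exists if and only if $w\notin R_p$, and the theorem reduces to showing $u\notin R_p\iff w\notin R_p$.

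To do this I would compare exponents. As $w$ is a unit of $\Rbar$ with $w>1$, we have $w=u^{e}$ for some integer $e\ge1$, and I claim $e\mid6$: the minimal power of $u$ lying in $\bZ[\sqrt p]$ divides $L(2,p)\in\{1,3\}$ when $p\equiv1\pmod4$ (Theorem~\ref{locally associated iff}, since then $\bZ[\sqrt p]=R_2$) and equals $1$ when $p\equiv3\pmod4$ (where $\bZ[\sqrt p]=\Rbar$), so in every case it divides $3$; therefore $u^{6}\in\bZ[\sqrt p]$ with $N(u^{6})=N(u)^{6}=1$, so $u^{6}$ is a norm-$1$ unit of $\bZ[\sqrt p]$ and hence a power of $w$, which forces $e\mid6$. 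Applying the lemma that $u^{a}\in R_p$ implies $m\mid a$ to $w=u^{e}$ gives $w\in R_p\iff m\mid e$. If $m=1$, then $w\in R_p$, consistent with $u\in R_p$. If $m=p$, then $p\nmid e$: for $p\equiv3\pmod4$ we have $N(u)=1$ (Proposition~\ref{unit norm 3 mod 4}), so $w=u$ and $e=1$; for $p\equiv1\pmod4$ we have $N(u)=-1$ (Proposition~\ref{unit norm prime 1 mod 4}), so $e$ is even and $e\mid6$ forces $e\in\{2,6\}$, whose only prime factors are $2$ and $3$, neither equal to $p$. In either case $w\notin R_p$, consistent with $u\notin R_p$. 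This gives $u\notin R_p\iff w\notin R_p$, completing the argument.

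I expect the real work to sit in this last paragraph: the translation lemma and the structure of the norm-$1$ unit group of $\bZ[\sqrt p]$ are routine, but pinning down that the fundamental norm-$1$ unit equals $u^{e}$ with $e\mid6$ and then excluding $p\mid e$ --- where the only delicate case is $p=3$, dispatched via $p\equiv3\pmod4$ --- is where the substance of the proof lies.
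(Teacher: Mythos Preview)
Your argument is correct and tracks the paper's proof closely: both reduce to showing $u\notin R_p$ iff some norm-$1$ unit of $\bZ[\sqrt p]$ lies outside $R_p$, and both settle the nontrivial direction via the observation that $u^{6}\in\bZ[\sqrt p]$ has norm $1$ while $p\nmid 6$ (with the case $p=3$ handled by $N(u)=1$, i.e.\ $e=1$). Your packaging through the fundamental norm-$1$ unit $w=u^{e}$ with $e\mid 6$ is a mild reorganization of the paper's direct use of $u^{6}$ as the witness, but the substance is the same.

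One small slip to fix: the assertion that ``$R_p$ is a subring of $\bZ[\sqrt p]$'' is false when $p\equiv 1\pmod 4$, since then $p\alpha=\tfrac{p+p\sqrt p}{2}\notin\bZ[\sqrt p]$ (equivalently, $R_p\subseteq R_2$ would require $2\mid p$). Fortunately this claim is not load-bearing in your argument: closure of $R_p$ under conjugation together with $w^{-1}=\bar w$ already gives $w\in R_p\Rightarrow w^{k}\in R_p$ for all $k\in\bZ$, or you can simply cite $w\in R_p\cap U(\Rbar)=U(R_p)$.
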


\begin{proof}
    First, assume that $p\equiv 3\modulo{4}$. Then by Proposition \ref{unit norm 3 mod 4}, the fundamental unit $u=a+b\sqrt{p}$ in $\Rbar$ has norm 1, meaning that $a^2-b^2p=1$. If $R_p$ is locally associated, then $p\nmid b$, and thus the Pell's equation $x^2-y^2p=1$ has an integer solution $(a,b)$ with $p\nmid b$. If $R_p$ is not locally associated, then $u\in R_p$, meaning that any unit $v=c+d\sqrt{p}\in U(\Rbar)$ is actually an element of $R_p$ (since $v=\pm u^k$ for some $k\in\bZ$). Then since the units in $U(\Rbar)$ are in one-to-one correspondence with the integer solutions to the Pell's equation $x^2-y^2p=1$, any solution $(c,d)$ to this equation will  necessarily have $p|d$. This completes the proof when $p\equiv 3\modulo{4}$.

    Now assume that $p\equiv 1\modulo{4}$. Then by Proposition \ref{unit norm prime 1 mod 4}, the fundamental unit $u=\frac{a+b\sqrt{p}}{2}$ in $\Rbar$ has norm $-1$, meaning that $a^2-b^2p=-4$. As before, note that if $R_p$ is not locally associated, then $u\in R_p$, meaning that any unit $v\in U(\Rbar)$ also lies in $R_p$. Since any solution $(c,d)$ to the Pell's equation $x^2-y^2p=1$ necessarily corresponds to a unit $v=c+d\sqrt{p}$ with $N(v)=1$, then necessarily $v\in R_p=\{\frac{a+b\sqrt{p}}{2}|a,b\in\bZ, a\equiv b\modulo{2}, p|b\}$. Thus, $p|2d$; since $p$ is an odd prime, then $p|d$. Now assume that $R_p$ is locally associated, i.e. $u^p$ is the minimal power of $u$ lying in $R_p$. By Definition \ref{L function}, note that $L(2,p)$ must either be 1 (if $p\equiv 1\modulo{8}$) or 3 (if $p\equiv 5\modulo{8}$); in either case, $u^3\in R_2=\bZ[\sqrt{p}]$. Thus, $u^6\in R_2$ as well, so $u^6=c+d\sqrt{p}$ with $c,d\in\bZ$ and $N(u^6)=(N(u))^6=(-1)^6=1$. Thus, $c^2-d^2p=1$. Moreover, note that since $p$ is the minimal power of $u$ lying in $R_p$ and $p\nmid 6$ (as $p\equiv 1\modulo{4}$), then $u^6\notin R_p$. In particular, this means that $p\nmid d$, yielding a solution $(c,d)$ to the Pell's equation $x^2-y^2p=1$ with $p\nmid d$, as desired.
\end{proof}

This theorem tells us that finding certain solutions to Pell's equation is equivalent to showing that the index $p$ order in the number field $\bQ[\sqrt{p}]$ is locally associated, where $p$ is an odd prime. Since the index $2$ order in $\bQ[\sqrt{2}]$ can easily be verified to be locally associated (for instance, by referencing the table at \cite{quadla}), this leads to the following conjecture.

\begin{conjecture}
    \label{pell's equation}
    Let $p\in\bN$ be prime. Then the index $p$ order $R_p$ in the quadratic number field $\bQ[\sqrt{p}]$ is locally associated. Equivalently, for any odd prime $p$, the Pell's equation $x^2-y^2p=1$ has a solution $(a,b)$ with $p\nmid b$.
\end{conjecture}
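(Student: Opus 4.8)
The plan is to peel the combinatorial and algebraic layers off the statement until only a purely Diophantine kernel remains, and then to identify that kernel with a classical problem. First I would dispose of $p=2$: the fundamental unit of $\bQ[\sqrt2]$ is $1+\sqrt2$, which does not lie in $R_2=\bZ[2\sqrt2]$, so since $L(2,2)=2$, Theorem~\ref{locally associated iff} shows $R_2$ is locally associated. For odd $p$, the theorem immediately preceding Conjecture~\ref{pell's equation} reduces the problem to showing that $x^2-y^2p=1$ admits an integer solution $(a,b)$ with $p\nmid b$.

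Next I would reduce this to a statement about the \emph{fundamental} Pell solution. Writing $\varepsilon=x_1+y_1\sqrt p$ for that solution, every solution satisfies $x_n+y_n\sqrt p=\varepsilon^n$, and the sequence $(y_n)$ obeys $y_0=0$ and $y_{n+1}=2x_1y_n-y_{n-1}$. Hence $p\mid y_1$ forces $p\mid y_n$ for all $n$, while $p\nmid y_1$ already exhibits the desired solution; so $R_p$ is locally associated if and only if $p\nmid y_1$. Using Propositions~\ref{unit norm 3 mod 4} and~\ref{unit norm prime 1 mod 4} and Lemmas~\ref{fundamental unit 3 mod 4} and~\ref{fundamental unit 1 mod 8}, together with the computation of $u^3$ appearing in the proof of case~(5) of the main theorem, this translates into: the fundamental unit $u$ of $\bQ[\sqrt p]$ has $\sqrt p$-coefficient prime to $p$ — precisely, $p\nmid b$ if $u=a+b\sqrt p$ (the case $p\equiv3\modulo4$), and $p\nmid w$ if $u=\tfrac{t+w\sqrt p}{2}$ (the case $p\equiv1\modulo4$), since the fundamental Pell solution is then $u^2$ or $u^6$ and its $\sqrt p$-coefficient is divisible by $p$ exactly when $w$ is.

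The hard part is everything after this reduction: the kernel is exactly the Ankeny--Artin--Chowla conjecture (for $p\equiv1\modulo4$) and its companion, usually attributed to Mordell (for $p\equiv3\modulo4$), both of which are longstanding open problems, confirmed only computationally (for all primes below roughly $10^{11}$). A genuine proof would have to go through the known $p$-adic reformulations — the Ankeny--Artin--Chowla congruence and related Kummer-type congruences, which rewrite $w\bmod p$ in terms of Bernoulli numbers and $p$-adic $L$-values — and show the relevant $p$-adic quantity is a unit; but there is at present no unconditional (nor even GRH-conditional) handle on fundamental units modulo $p$, so a complete argument is out of reach.

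Accordingly, I would aim at partial statements compatible with the methods here: (i) primes for which the fundamental unit is small enough that $y_1<p$ — for instance $p=k^2+1$, where $u=k+\sqrt p$ has $\sqrt p$-coefficient $1$ — are handled outright; (ii) one might hope for a ``for almost all primes'' version, in line with the heuristic that $p\mid y_1$ occurs with density $\asymp 1/p$; and (iii) by case~(4) of the main theorem, establishing the conjecture for a given prime $p$ automatically gives that $R_{p^k}$ is locally associated in $\bQ[\sqrt p]$ for every $k$, so the index-$p$ order is the sole remaining obstruction to a clean characterization at all prime-power indices.
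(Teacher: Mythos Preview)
Your assessment is correct, and indeed there is nothing to compare against: the paper does not prove this statement. It is presented as a conjecture, with the authors remarking only that it ``seems relatively simple, but it as yet seems to defy simple proof,'' and offering as evidence the computational verification for all primes $p<1000$ from the table at \cite{quadla}.

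Your proposal goes further than the paper in a useful way. The reduction you outline --- that $R_p$ is locally associated if and only if the $\sqrt{p}$-coefficient of the fundamental unit is prime to $p$ --- is correct (and implicit in the paper's setup, since $L(p,p)=p$ forces $m\in\{1,p\}$), and your identification of the resulting kernel with the Ankeny--Artin--Chowla conjecture (for $p\equiv 1\modulo 4$) and Mordell's companion conjecture (for $p\equiv 3\modulo 4$) is a genuine contribution that the paper does not make. This connection explains \emph{why} the authors found no simple proof: these are well-known open problems with only massive computational evidence in their favor. Your remarks on partial cases (primes with small fundamental unit, e.g.\ $p=k^2+1$) and the density heuristic are also appropriate and not present in the paper.

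One minor quibble: your claim that the fundamental Pell solution is ``$u^2$ or $u^6$'' in the $p\equiv 1\modulo 4$ case is slightly imprecise --- for $p\equiv 1\modulo 8$ it is $u^2$, while for $p\equiv 5\modulo 8$ the smallest power of $u$ in $\bZ[\sqrt p]$ may be $u$ or $u^3$ depending on whether $u$ already has integer coefficients, so the fundamental Pell solution is $u^2$ or $u^6$ accordingly. But the conclusion you draw (that its $\sqrt p$-coefficient is divisible by $p$ exactly when $w$ is) is correct in all cases, since $N(u)=-1$ forces $p\nmid t$.
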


This conjecture seems relatively simple, but it as yet seems to defy simple proof. That said, the table at \cite{quadla} shows that this conjecture holds for the first 168 primes (all primes $p<1000$).

While this case is perhaps the most interesting unsettled case, there are a number of additional cases that the main results from the previous section do not address. We list them here, along with data from the table at \cite{quadla} regarding how often these orders are (or are not) locally associated. Throughout the following list, we use $n=q^k$ to refer to the prime-power index of the order $R_n$ in the number field $\bQ[\sqrt{p}]$, with $p$ a prime.

\begin{enumerate}
    \item $p\equiv 5\modulo{8}$ and $n=2$. Of the 43 occurrences of such an order in the table, 28 are locally associated.
    \item $p\not\equiv 3\modulo{4}$ and $q\equiv 3\modulo{4}$. Of the 50,139 occurrences of such an order in the table, 37,722 are locally associated.
    \item $p\not\equiv 3\modulo{4}$ and $n=q^2$ with $q\equiv 3\modulo{4}$, where $R_q$ is locally associated. Of the 865 occurrences of such an order in the table, 793 are locally associated.
\end{enumerate}

While these cases remain unsolved, it is worth noting that these are the only remaining unsolved cases. In particular, when determining whether an order $R_n$ in a real quadratic number field $K=\bQ[\sqrt{p}]$, orders which match the description in Conjecture \ref{pell's equation} or Case 1 above can be settled only by determining the fundamental unit in $K$. Orders which fall under Case 2 above can be settled by considering powers $u^k$ of the fundamental unit $u$ with $k|L(q,p)$. Finally, orders which fall under Case 3 above can be settled by noting whether $u^{L(q,p)}\in R_{q^2}$ (since $L(q^2,p)=q\cdot L(q,p)$). Whether any other order is locally associated can either be answered using the major results in the previous section or combining one of those results with one of the cases described here. This vastly cuts down on the work needed to identify locally associated orders.

\bibliographystyle{plain}
    \bibliography{bibliography}
    
\end{document}